\documentclass[a4paper,11pt]{article}
% Packages
\RequirePackage[T1]{fontenc}
\usepackage{palatino}
\usepackage{hyperref}
\hypersetup{colorlinks=true, linkcolor=blue,  anchorcolor=blue, citecolor=blue, filecolor=blue, menucolor=blue, urlcolor=blue}
\usepackage[english]{babel}
\usepackage[utf8]{inputenc}
\usepackage[font={small,sf},labelfont=bf]{caption}
\usepackage{amsmath,amssymb,amsfonts,mathrsfs,amsthm}
\usepackage{graphicx,pdfpages,enumitem,cite}
% Math
\theoremstyle{plain}% default 
\newtheorem{theorem}{Theorem}[section] 
\newtheorem{lemma}[theorem]{Lemma} 
\newtheorem{proposition}[theorem]{Proposition} 
\theoremstyle{plain} 
\newtheorem{definition}{Definition}[section] 
\theoremstyle{definition} 
\newtheorem{remark}{Remark}[section] 
% Margins
\usepackage{geometry}
\geometry{ a4paper, total={170mm,257mm}, left=20mm, top=20mm}
% Article Information
\title{Stability of Forced-Damped Response in Mechanical Systems from a Melnikov Analysis} %\sffamily \bfseries
\author{Mattia Cenedese and George Haller
\thanks{Corresponding author: \href{mailto:georgehaller@ethz.ch}{georgehaller@ethz.ch}}\vspace{0.35cm}\\ 
Institute for Mechanical Systems, ETH Z\"urich,
\\ Leonhardstrasse 21, 8092 Z\"urich, Switzerland}
\date{\today}
\begin{document}
\maketitle
\begin{abstract}
	\noindent Frequency responses of multi-degree-of-freedom mechanical systems with weak forcing and damping can be studied as perturbations from their conservative limit. Specifically, recent results show how bifurcations near resonances can be predicted analytically from conservative families of periodic orbits (nonlinear normal modes). However, the stability of forced-damped motions is generally determined a posteriori via numerical simulations. In this paper, we present analytic results on the stability of periodic orbits that perturb from conservative nonlinear normal modes. In contrast with prior approaches to the same problem, our method can tackle strongly nonlinear oscillations, high-order resonances and arbitrary types of non-conservative forces affecting the system, as we show with specific examples.
\end{abstract}

\section{Introduction}
Conservative families of periodic orbits or nonlinear normal modes (NNMs) offer precious insights into multi-degree-of-freedom nonlinear vibrations \cite{Kerschen2009}. Indeed, experimental and numerical observations indicate that these periodic orbits shape the behavior of mechanical systems even after the addition of weak forcing and damping \cite{Touze2006,Peeters2011a,Peeters2011b,Ehrhardt2016,Renson2016b,Hill2017,Szalai2017}. A systematic mathematical analysis of this phenomenon via a Melnikov-type method has recently appeared in \cite{Cenedese2019}. A fundamental question of practical importance, however, is yet to be answered: what is the stability type of the forced-damped oscillations that perturb from their conservative counterparts. Different cases have been described in literature, ranging from the classic hysteretic amplitude-frequency curve of geometrically nonlinear structures \cite{Kovacic2008,Avramov2008} to the outbreak of unstable isolas and regions of the frequency response arising from nonlinear damping \cite{Ponsioen2019}. For detailed reviews, we refer the reader to the classic studies of Vakakis \cite{Vakakis1997,Vakakis2001,Vakakis2008}, Avramov and Mikhlin \cite{Avramov2011,Avramov2013}, Kerschen \cite{Kerschen2014} and to the introduction in \cite{Cenedese2019}.

In studies of mechanical systems with many degrees of freedom, stability is typically assessed numerically, either using Floquet's theory \cite{Floquet1883,GH1983} in the time domain, or by adopting Hill's method in the framework of harmonic balance \cite{Lazarus2010,Guillot2020}. Analytical perturbation expansions, such as the method of multiple scales \cite{NayfehM2007}, averaging \cite{Sanders2007}, the first-order normal form technique \cite{Touze2004} or the second-order normal form technique \cite{Neild2011}, are more suitable for low-dimensional oscillators and small-amplitude regimes. Spectral submanifolds \cite{Haller2016} provide another powerful tool for the analysis of forced-damped, nonlinear mechanical systems in a small enough neighborhood of the unforced equilibrium. Based on the relationship of these manifolds with their conservative limit \cite{Breunung2018,delaLlave2019,Veraszto2020}, known as Lyapunov subcenter manifold \cite{Lyapunov1992,Kelley1967}, the stability of forced-damped response can be analytically and numerically determined for small amplitudes \cite{Breunung2018,Ponsioen2019}.

The existence and stability of periodic orbits in near-integrable, low-dimensional systems has been studied via Melnikov-type methods, whose foundations date back to Poincaré \cite{Poincare1892}, Melnikov \cite{Melnikov1963} and Arnold \cite{Arnold1964}. In particular, for periodic orbits in single-degree-of-freedom systems, a Melnikov method can be used to detect perturbed, resonant periodic motions as well as their stability (see the excellent exposition in Chapter IV of \cite{GH1983} and the further analyses in \cite{Yagasaki1996,Bonnin2008}). Extensions to multi-degree-of-freedom systems have been available \cite{Veerman1985,Yagasaki1999}, but these approaches require low dimensionality and integrability before perturbation, neither of which holds for typical systems in structural dynamics.

In this work, we seek to fill this gap by complementing the analytical existence conditions for periodic responses in \cite{Cenedese2019} with assessment of their stability. Specifically, we develop a Melnikov method to establish the persistence of forced-damped periodic motions from conservative NNMs, without any restrictions on the number of degrees of freedom, the motion amplitude or the type of the non-conservative forces affecting the system. We assume that the conservative limit of the system is Hamiltonian, with no additional requirements on its integrability, and that the perturbation of this limit is small. In this setting, we derive analytical conditions to determine whether the oscillations created by small forcing and damping are unstable or asymptotically stable, thus generalizing the single-degree-of-freedom analysis of \cite{GH1983} to multi-degree-of-freedom systems. This extension is non-trivial and necessitates a new approach.

After stating and proving our mathematical results in a general setting for mechanical systems, we illustrate the validity of our predictions with numerical examples. These include subharmonic resonances in a gyroscopic system and isolated response generated by parametric forcing in a system of three coupled nonlinear oscillators.

\section{Setup}
\label{sec:S2}
We consider a mechanical system with $n$ degrees of freedom and denote its generalized coordinates by $q\in \mathbb{R}^n$, $n\geq 1$. We assume that this system is a small perturbation of a conservative limit described by the Lagrangian
\begin{equation}
\label{eq:Lagrcons}
\begin{array}{lcr}
L(q,\dot{q})=K(q,\dot{q})-V(q) &, &\displaystyle K(q,\dot{q})=\frac{1}{2}\langle\, \dot{q} \, , \, M(q)\dot{q} \rangle + \langle\, \dot{q} \, , \, G_1(q)\rangle + G_0(q),
\end{array}
\end{equation}
where, $M(q)$ is the positive definite, symmetric mass matrix, $K(q,\dot{q})$ is the kinetic energy and $V(q)$ the potential. The kinetic terms $G_1(q)$ and $G_0(q)$ may appear in conservative rotating mechanical systems after one factors out the cyclic coordinates whose corresponding angular momenta are conserved. The equations of motion for system (\ref{eq:Lagrcons}) take the form
\begin{equation}
\label{eq:MechSys}
D_t\left(\partial_{\dot{q}} L \right)-\partial_q L=\varepsilon Q(q,\dot{q},t;\delta,\varepsilon),
\end{equation}
where $D$ and $\partial$ denote total and partial differentiation with respect to the variable in subscript (for $D$, to all of them if the subscript is absent). Moreover, $\varepsilon\geq 0$ is the perturbation parameter and $\varepsilon Q(q,\dot{q},t;\delta,\varepsilon)=\varepsilon Q(q,\dot{q},t+\delta;\delta,\varepsilon)$ denotes a small, generic perturbation of time-period $\delta$.

As $L$ is a convex function of $\dot{q}$, the conservative system can be written in Hamiltonian form \cite{Arnold1989}. Introducing the generalized momenta
\begin{equation}
p=\partial_{\dot{q}} L=M(q)\dot{q}+G_1(q),
\end{equation}
we can express the velocities as $\dot{q}=F(q,p)=M^{-1}(q)(p-G_1(q))$ and the total energies as
\begin{equation}
\label{eq:Hdef}
H(q,p)=\langle\, p \, , \, F(q,p) \rangle -L(q,F(q,p))=\frac{1}{2}\langle\, p-G_1(q) \, , \, M^{-1}(q)(p-G_1(q)) \rangle - G_0(q) + V(q).
\end{equation}
Introducing the notation $x=(q,p)\in\mathbb{R}^{2n}$, we obtain the equations of motion in the form
\begin{equation}
\label{eq:NAutSysG}
\begin{array}{lr}
\dot{x}=JDH(x)+\varepsilon g(x,t;\delta,\varepsilon), & J = \begin{bmatrix} 0_{n\times n} & I_{n\times n} \\ -I_{n\times n} & 0_{n\times n} \end{bmatrix},
\end{array}
\end{equation}
where we assume that $H\in C^{r+1}$ with $r\geq 3$, while $g$ is $C^{r-1}$ in $t$ and $C^{r}$ in the other arguments. The vector fields in (\ref{eq:NAutSysG}) are defined as
\begin{equation}
\label{eq:FGDef}
\begin{array}{lr}
\displaystyle DH(x)= \begin{pmatrix} \partial_q H \\ \partial_p H \end{pmatrix}=\begin{pmatrix} -\partial_q L(q,F(q,p)) \\ F(q,p) \end{pmatrix}, &
\displaystyle g(x,t;\delta,\varepsilon)= \begin{pmatrix} 0 \\ Q(q,F(q,p),t;\delta,\varepsilon)  \end{pmatrix}.
\end{array}
\end{equation}
We assume that any further parameter dependence in our upcoming derivations is of class $C^r$ and that the model (\ref{eq:NAutSysG}) is valid in a subset $\mathcal{U}\subseteq\mathbb{R}^{2n}$ of the phase space . Trajectories of (\ref{eq:NAutSysG}) that start from $\xi\in\mathbb{R}^{2n}$ at $t=0$ will be denoted with $x(t;\xi,\delta,\varepsilon)=(q(t;\xi,\delta,\varepsilon),$ $p(t;\xi,\delta,\varepsilon))$. We will also use the shorthand notation $x_0(t;\xi)=(q_0(t;\xi),p_0(t;\xi))=x(t;\xi,\delta,0)$ for trajectories of the unperturbed (conservative) limit of system (\ref{eq:NAutSysG}). The equation of (first) variations for system (\ref{eq:NAutSysG}) about the solution $x(t;\xi,\delta,\varepsilon)$ reads
\begin{equation}
\label{eq:NAutSysG_FV}
\begin{array}{lr}
\dot{X}=\big( JD^2H(x(t;\xi,\delta,\varepsilon))+\varepsilon \partial_xg(x(t;\xi,\delta,\varepsilon),t;\delta,\varepsilon)\big)X, & X(0) = I_{2n\times 2n},
\end{array}
\end{equation}
whose solutions for $\varepsilon=0$ will be denoted as $X_0(t;\xi)=X(t;\xi,\delta,0)$. As long as $x_0(t;\xi) \in \mathcal{U}$, we recall that $H(x_0(t;\xi))=H(\xi)$ and that $X_0(t;\xi)$ is a symplectic matrix  \cite{deGosson2011}.

\section{The Melnikov method for resonant perturbation of normal families of conservative periodic orbits}
\label{sec:S3}
In this section, we briefly recall the results from Ref. 9 on the existence of perturbed periodic orbits for system (5) for small $\varepsilon>0$. We assume that there exists a periodic orbit $\mathcal{Z}\subset \mathcal{U}$ with minimal period $\tau>0$ solving system (\ref{eq:NAutSysG}) for $\varepsilon=0$. We can, therefore, consider this orbit as $m\tau$-periodic for a positive integer $m$ and we denote by $\Pi = X_0(m\tau;z)=X^m_0(\tau;z)$ the monodromy matrix of $\mathcal{Z}$ based at the point $z\in\mathcal{Z}$ evaluated along $m$ cycles of the periodic orbit $\mathcal{Z}$. The integer $m$ allows us to consider high-order resonances with the perturbation period. Note that $\Pi$ is nonsingular and, for a generic periodic orbit of an Hamiltonian system\cite{MO2017,Sepulchre1997}, $\Pi$ has at least two eigenvalues equal to $+1$. We further assume that $\mathcal{Z}$ is $m$-normal, i.e.,
\begin{definition}
\label{def:mnormality}
	A conservative periodic orbit $\mathcal{Z}$ is $m$-\textit{normal} if one of the following holds:
	\begin{enumerate}[nolistsep,label=\textit{(\roman*)}]
	\item The geometric multiplicity of the eigenvalue $+1$ of $\Pi$ is equal to $1$;
	\item The geometric multiplicity of the eigenvalue $+1$ of $\Pi$ is equal to $2$ and $JDH(z)\notin\mathrm{range}\left(\Pi-I\right)$ .	
\end{enumerate}
\end{definition}
Such orbits exist in one-parameter families \cite{Sepulchre1997,Doedel2003}. We give an illustration of $m$-normality in Fig. \ref{fig:S3_IM1}a where we indicate with $\mathcal{F}\subset \mathcal{U}$ the family emanating from $\mathcal{Z}$. Note that if a periodic orbit is not $m$-normal, then two or more families of periodic orbits may bifurcate from this periodic orbit. Moreover, a periodic orbit can be $1$-normal and not $m$-normal for some $m>1$, as in the case of subharmonic branching \cite{MO2017,Vanderbauwhede1997}. Any $m$-normal family of periodic orbits can be parametrized by the values of a scalar function $r$ that depends on a point of an orbit and its period. Common examples are $r(z,m\tau)=m\tau$ (period parametrization) or $r(z,m\tau)=H(z)$ (energy parametrization).

We aim to find periodic solutions of system (\ref{eq:NAutSysG}) for small $\varepsilon>0$ that may arise from $\mathcal{Z}$ or from the family $\mathcal{F}$.  Given a point $z\in\mathcal{Z}$ and a positive integer $l$ such that $m$ and $l$ are relatively prime, we constrain the initial conditions and periods of these perturbed solutions to satisfy the following resonance condition
\begin{figure}[t]
	\centering
	\includegraphics[width=1\textwidth]{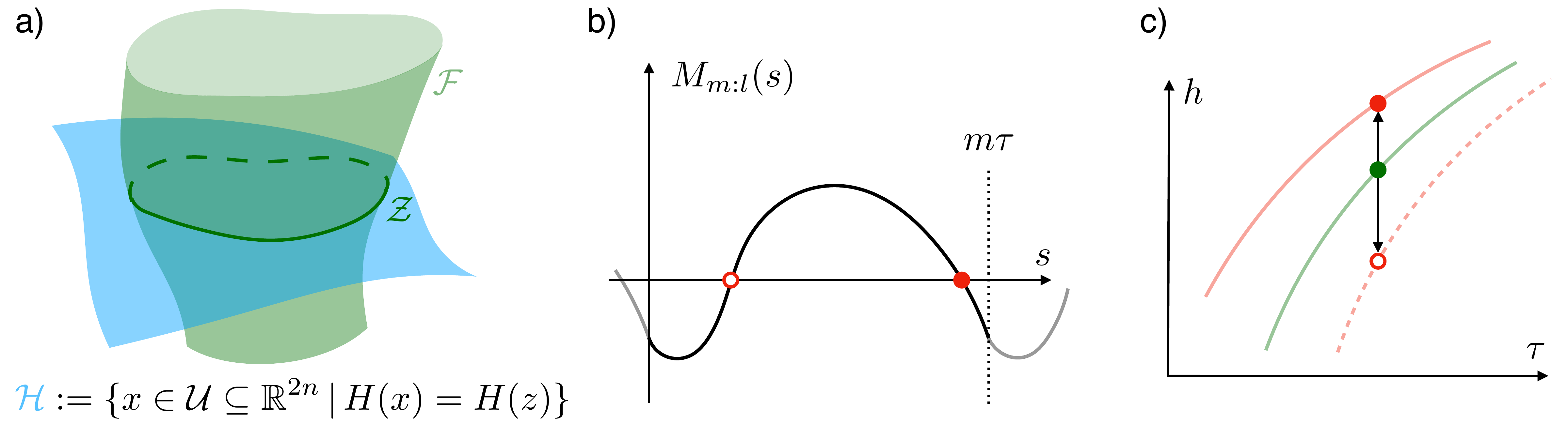}
	\caption{Review of the Melnikov analysis. Plot (a) shows a normal family $\mathcal{F}$ of periodic orbits (in green) and the orbit $\mathcal{Z}$ with its energy level $\mathcal{H}$. Plot (b) sketches one period of a Melnikov function (black line) with 2 simples zeros, corresponding to two perturbed orbits (red dots) from the conservative limit (green dot) in plot (c), displayed in the $(\tau,h)$ plane. Along with the backbone curve of $\mathcal{F}$ in green, the red shaded lines show frequency sweeps that can be detected via parameter continuation of the Melnikov function (see \cite{Cenedese2019} for more). }
	\label{fig:S3_IM1}
\end{figure}
\begin{equation}
\label{eq:rescond}
r(\xi,l\delta)=r(z,m\tau).
\end{equation}
The persistence problem of the periodic orbit $\mathcal{Z}$ is governed by the Melnikov function\cite{Cenedese2019}
\begin{equation}
\label{eq:MelFun}
\displaystyle M_{m:l}(s)=\int_0^{m\tau} \big\langle\, DH(x_0(t+s;z))\, , 
\, g(x_0(t+s;z),t;\tau m/l,0)\,\big\rangle\, dt .
\end{equation}
Specifically, if there exist a simple zero at $s_0$ of $M_{m:l}(s)$, i.e.,
\begin{equation}
\label{eq:simpzer}
\begin{array}{rcl}
M_{m:l}(s_0)=0 &,& DM_{m:l}(s_0)=M_{m:l}'(s_0)\neq 0,
\end{array}
\end{equation}
then $\mathcal{Z}$ smoothly persists with an initial condition $x_0(s_0;z)+O(\varepsilon)$ and a period $m\tau+O(\varepsilon)$. The period $\delta$ of the perturbation $g$ in system (\ref{eq:NAutSysG}) is then $O(\varepsilon)$-close to $\tau m/l$. Reference 9 also discusses the implications of different choices for $r$ in Eq. (8). As the Melnikov function (\ref{eq:MelFun}) is $m\tau$-periodic, it generically has another simple zero for $s\in(s_0,s_0+m\tau)$, as shown in Fig. \ref{fig:S3_IM1}b. Thus, at least two orbits bifurcate from the conservative limit (cf. Fig. \ref{fig:S3_IM1}c) when the conditions in Eq. (\ref{eq:simpzer}) are satisfied.

The Melnikov function (\ref{eq:MelFun}) can also be used to detect bifurcations under parameter continuation. For example, a quadratic zero generically signals a saddle-node bifurcation of periodic orbits or the formation of an isola for the frequency response for small $\varepsilon>0$. In \cite{Cenedese2019}, we also point out that our exact Melnikov analysis can be interpreted as an energy-balance principle proposed earlier\cite{Hill2015,Hill2016,Peter2018}. In particular, a perturbation expansion shows that the Melnikov function equals the work done by non-conservative forces along the periodic orbit $\mathcal{Z}$ of the conservative limit, i.e.,
\begin{equation}
\label{eq:MelFunEn}
\displaystyle M_{m:l}(s)=\int_0^{m\tau} \big\langle\, \dot{q}_0(t+s;z))\, , 
\, Q(q_0(t+s;z),\dot{q}_0(t+s;z),t;\tau m/l,0)\,\big\rangle\, dt ,
\end{equation}
where $\dot{q}_0(t+s;z)=\partial_pH(q_0(t+s;z),p_0(t+s;z))$.

In  Fig. \ref{fig:S3_IM1}c, we depict in red the two different branches of periodic orbits that originate from an $m$-normal periodic orbits family of the conservative limit (green). In the next sections, we shall derive analytical conditions under which the solid red branch contains asymptotically stable periodic orbits and the dashed branch is composed of unstable periodic orbits.

\section{Stability of perturbed periodic solutions}
\label{sec:S4}

In this section, we state our main mathematical results on the stability of solutions arising as perturbations from the conservative limit. We recall that the eigenvalues of the monodromy matrix, or Floquet multipliers \cite{Teschl2012,Chicone2000,GH1983}, determine the stability of a periodic orbit. In particular, the orbit is asymptotically stable if all of its Floquet multipliers lay inside the unit circle in the complex plane, while it is unstable if there exist a multiplier outside the unit circle. In accordance with the previous section, we now make the following assumption:
\begin{enumerate}[label=(A.\arabic*)]
\item The periodic orbit $\mathcal{Z}\subset \mathcal{U}$ of system (\ref{eq:NAutSysG}) for $\varepsilon=0$ is $m$-normal and, after a possible phase shift, its Melnikov function $M_{m:l}(s)$ defined in Eq. (\ref{eq:MelFun}) has a simple zero at $0$.
\end{enumerate}
By \cite{Cenedese2019}, assumption (A.1) implies the existence of an $l\delta$-periodic orbit, denoted $\mathcal{Z}_\varepsilon$, satisfying system (\ref{eq:NAutSysG}) for small $\varepsilon>0$, with $l\delta = m\tau+O(\varepsilon)$ and with initial condition $O(\varepsilon)$-close to $z$. Our first results is the following simple consequence of our assumptions.
\begin{proposition}
\label{prop:gen_instability}
If there exists an eigenvalue $\mu$ of $\Pi$ such that $|\mu|>1$, then $\mathcal{Z}_\varepsilon$ is unstable for $\varepsilon$ small enough.
\end{proposition}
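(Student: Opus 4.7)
The proposition is a direct consequence of continuous dependence of ODE solutions and eigenvalues on parameters. My plan is to show that the monodromy matrix $\Pi_\varepsilon$ of the perturbed orbit $\mathcal{Z}_\varepsilon$ converges to $\Pi$ as $\varepsilon\to 0^+$, so an eigenvalue of modulus $>1$ persists for small $\varepsilon$, and then to invoke Floquet theory to conclude instability.

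First, I would invoke the persistence result recalled from \cite{Cenedese2019}: under (A.1), there exist $\xi_\varepsilon=z+O(\varepsilon)$ and $T_\varepsilon=l\delta=m\tau+O(\varepsilon)$ such that $\mathcal{Z}_\varepsilon=\{x(t;\xi_\varepsilon,\delta,\varepsilon)\,:\,t\in[0,T_\varepsilon]\}$, and that the monodromy matrix of $\mathcal{Z}_\varepsilon$ is $\Pi_\varepsilon:=X(T_\varepsilon;\xi_\varepsilon,\delta,\varepsilon)$, with $X$ as in (\ref{eq:NAutSysG_FV}). Standard continuous dependence of solutions of (\ref{eq:NAutSysG}) on initial conditions and on the parameter $\varepsilon$ implies that $x(t;\xi_\varepsilon,\delta,\varepsilon)\to x_0(t;z)$ uniformly on $[0,m\tau+1]$. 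Since $H\in C^{r+1}$ and $g\in C^r$ in its spatial argument with $r\geq 3$, the time-dependent coefficient matrix
\[
A_\varepsilon(t)=JD^2H(x(t;\xi_\varepsilon,\delta,\varepsilon))+\varepsilon\,\partial_x g(x(t;\xi_\varepsilon,\delta,\varepsilon),t;\delta,\varepsilon)
\]
converges uniformly on compact time intervals to $JD^2H(x_0(t;z))$.

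Next, I would apply Gr\"onwall's inequality to the variational equation (\ref{eq:NAutSysG_FV}) written as $\dot X=A_\varepsilon(t)X$ with $X(0)=I$: since both coefficient matrices are uniformly bounded on $[0,m\tau+1]$ and their difference converges uniformly to $0$, the fundamental matrix $X(t;\xi_\varepsilon,\delta,\varepsilon)$ converges uniformly on the same interval to $X_0(t;z)$. Because $X_0(\,\cdot\,;z)$ is Lipschitz in $t$, replacing the evaluation time $m\tau$ by $T_\varepsilon=m\tau+O(\varepsilon)$ costs only a further $O(\varepsilon)$ in norm. Combining, $\Pi_\varepsilon\to\Pi$ in matrix norm as $\varepsilon\to 0^+$.

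Finally, I would use the fact that the roots of the characteristic polynomial of a matrix depend continuously on the matrix entries. Hence there is an eigenvalue $\mu_\varepsilon$ of $\Pi_\varepsilon$ with $\mu_\varepsilon\to\mu$, so $|\mu_\varepsilon|>1$ for all $\varepsilon$ sufficiently small. By Floquet's theorem, as recalled at the start of Section \ref{sec:S4}, the presence of a Floquet multiplier outside the unit disk forces $\mathcal{Z}_\varepsilon$ to be unstable, concluding the proof. There is no real obstacle here beyond making the uniform-on-compact convergence statements precise; the mildest subtlety is that the evaluation time itself varies with $\varepsilon$, which is dispatched by the Lipschitz bound on $X_0(t;z)$ noted above.
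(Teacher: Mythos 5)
Your proposal is correct and follows essentially the same route as the paper: show the monodromy matrix of $\mathcal{Z}_\varepsilon$ is close to $\Pi$, use continuity of eigenvalues to propagate the multiplier with $|\mu|>1$, and conclude instability from Floquet theory. The only difference is that you spell out the convergence via Gr\"onwall and continuity of characteristic-polynomial roots, whereas the paper outsources these steps to smooth dependence of the flow on $\varepsilon$ and a quantitative eigenvalue-perturbation bound from \cite{Stewart1990}.
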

\begin{proof}
The maximum distance between the Floquet multipliers of the orbit $\mathcal{Z}_\varepsilon$ and those of its conservative limit $\mathcal{Z}$ can be bounded with a suitable power of the parameter $\varepsilon$ (see, for example, Theorem 1.3 in Chapter IV of \cite{Stewart1990}). Hence, if there exists a multiplier $\mu$ of the conservative limit such that $|\mu|>1$, then, for $\varepsilon$ sufficiently small, $\mathcal{Z}_\varepsilon$ has a multiplier with the same property by the smooth dependence of the flow map on the parameter $\varepsilon$. 
\end{proof}
If the assumption of Proposition \ref{prop:gen_instability} is not satisfied, then perturbations of $\mathcal{Z}$ may result in orbits with different stability types. Henceforth we assume the following condition to be satisfied:
\begin{enumerate}[label=(A.\arabic*)]\setcounter{enumi}{1}
\item The eigenvalues of the monodromy matrix $\Pi$ lie on the unit circle in the complex plane.
\end{enumerate}
In the literature on Hamiltonian systems, an orbit satisfying (A.2) is called spectrally stable \cite{Sbano2009}. From a practical viewpoint, such orbits are of great interest as small perturbations of them may create asymptotically stable (and hence observable) periodic responses. We also need the next nondegeneracy assumption:
\begin{enumerate}[label=(A.\arabic*)]\setcounter{enumi}{2}
\item The algebraic and geometric multiplicities of the eigenvalue $+1$ of $\Pi$ are $2$ and $1$, respectively. %equal to
\end{enumerate}
This guarantees that $\mathcal{F}$ can be locally parametrized either with the values of the first integral $H$ or with the period of the orbits \cite{Doedel2003}. In the former case, there exist a scalar mapping $T:\mathbb{R}\rightarrow\mathbb{R}^+$ that locally describes the minimal period of the orbits in $\mathcal{F}$ near $\mathcal{Z}$ as function of $H$. Moreover, $\tau = T(h)$ and $DT(h)=T'(h)\neq 0$ hold, where $h=H(z)$ is the energy level of $\mathcal{Z}$. 

To state further stability results, we need some definitions. Let $\mathcal{V}$ be a $2v$-dimensional invariant subspace for $\Pi$ and let $R_{\mathcal{V}}\in\mathbb{R}^{2n\times 2v}$ be a matrix whose columns form a basis of $\mathcal{V}$. We then have the following identity
\begin{equation}
\Pi R_{\mathcal{V}}=R_{\mathcal{V}}B_{\mathcal{V}}
\end{equation}
for a unique $B_{\mathcal{V}}\in\mathbb{R}^{2v\times 2v}$.
\begin{definition}
\label{def:stronginv}
We call $\mathcal{V}$ a strongly invariant subspace for $\Pi$ if $\mathrm{det}(B_\mathcal{V})=1$ and all the eigenvalues of $B_{\mathcal{V}}$ are not repeated in the spectrum of $\Pi$.
\end{definition}
Strongly invariant subspaces persist under small perturbations of $\Pi$, as shown in \cite{Stewart1990,Karow2014}, and we exploit this property in our technical proofs. The even dimensionality of any $\mathcal{V}$ in real form is a consequence of the fact that the eigenvalues of any symplectic matrix either appear in pairs $(\mu,1/\mu)$ or in quartets $(\mu,1/\mu,\bar{\mu},1/\bar{\mu})$ \cite{MH1992}. For the matrix $R_{\mathcal{V}}$, we call the left inverse 
\begin{equation}
\label{eq:symplli}
S_{\mathcal{V}} = \left( R_{\mathcal{V}}^{\top}J R_{\mathcal{V}} \right)^{-1}R_{\mathcal{V}}^{\top}J
\end{equation}
the symplectic left inverse of $R_\mathcal{V}$. As we later prove in the Appendix, $S_{\mathcal{V}}$ is well-defined.

Finally, we will use the notation
\begin{equation}
\label{eq:contrdef}
C_{\mathcal{V}} =-\frac{1}{m\tau v}\int_0^{m\tau}\mathrm{trace}\left(S_{\mathcal{V}} X^{-1}_0(t;z)\partial_x g(x_0(t;z),t;m\tau/l,0)X_0(t;z)R_{\mathcal{V}}\right)dt
\end{equation}
for the (local) volume contraction of the vector field $g$ along the orbit $\mathcal{Z}$ related to the strongly invariant subspace $\mathcal{V}$. 

The quantity $C_{\mathcal{V}}$, that serves as a nonlinear damping rate for the orbit $\mathcal{Z}_\varepsilon$, will turn out to have a key role in some of our upcoming conditions for the stability of $\mathcal{Z}_\varepsilon$. We also remark that $C_\mathcal{V}$ is invariant under changes of basis for $\mathcal{V}$. Indeed, we can define $\tilde{R}_\mathcal{V}=R_\mathcal{V}R_c$ for some invertible $R_c\in\mathbb{R}^{2v\times 2v}$ and obtain $\tilde{S}_\mathcal{V}=R_c^{-1}S_\mathcal{V}$, so that the invariance of the trace guarantees the one of $C_\mathcal{V}$. In particular, if $\mathcal{V}=\mathbb{R}^{2n}$, we have
\begin{equation}
\label{eq:divg}
C_{\mathbb{R}^{2n}} =-\frac{1}{m\tau n}\int_0^{m\tau}\mathrm{trace}\left( \partial_x g(x_0(t;z),t;m\tau/l,0) \right)dt.
\end{equation}

\subsection{Conditions for instability}
\label{sec:S4_1}

Due to assumption (A.3), the tangent space $\mathcal{T}$ of the family $\mathcal{F}$ at the point $z$ is the two-dimensional strongly invariant subspace for $\Pi$ related to its eigenvalues equal to $+1$. Due to the non-trivial Jordan block corresponding to these eigenvalues, the assessment of stability requires careful consideration. For a single-degree-of-freedom system ($n=1$), the tangent space is the only strongly invariant subspace for $\Pi$. For higher-dimensional systems ($n>1$), instabilities may develop also in the normal space $\mathcal{N}$ of the family $\mathcal{F}$ at the point $z$. The normal space is the $2(n-1)$-dimensional strongly invariant subspace for $\Pi$ related to its eigenvalues different from $+1$. The following theorem covers some generic cases of instability.
\begin{theorem}
\label{thm:instability} [\textbf{Sufficient conditions for instability}]. $\mathcal{Z}_{\varepsilon}$ is unstable for $\varepsilon >0$ small enough, if one of the following conditions is satisfied.
\begin{enumerate}[label=\textit{(\roman*)}]
\item Instabilities in $\mathcal{T}$: when $T'(h)M_{m:l}'(0)<0$ or when both $T'(h)M_{m:l}'(0)>0$ and $C_{\mathcal{T}}<0$.
\item Further instabilities ($n>1$): when $C_{\mathcal{V}}<0$ in a strongly invariant subspace $\mathcal{V}$ for $\Pi$.
\end{enumerate}
\end{theorem}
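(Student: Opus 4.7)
The plan is to analyse the spectrum of the monodromy matrix $\Pi_\varepsilon$ of $\mathcal{Z}_\varepsilon$ by decomposing $\mathbb{R}^{2n}$ into the strongly invariant subspaces of $\Pi$: each such $\mathcal{V}$ admits, for $\varepsilon$ small, a perturbed invariant subspace $\mathcal{V}_\varepsilon$ close to $\mathcal{V}$ on which the restricted map $B_{\mathcal{V},\varepsilon}$ is smooth in $\varepsilon$ and satisfies $B_{\mathcal{V},0}=B_{\mathcal{V}}$ (cf.\ \cite{Stewart1990,Karow2014}). Because the Floquet multipliers of $\mathcal{Z}_\varepsilon$ are the union of the spectra of the $B_{\mathcal{V},\varepsilon}$, it is enough to exhibit a single $\mathcal{V}$ on which $B_{\mathcal{V},\varepsilon}$ acquires an eigenvalue outside the closed unit disk.

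For case (ii), I would compute the first-order correction to $\det B_{\mathcal{V},\varepsilon}$. Differentiating $\Pi_\varepsilon R_{\mathcal{V},\varepsilon}=R_{\mathcal{V},\varepsilon}B_{\mathcal{V},\varepsilon}$ at $\varepsilon=0$, left-multiplying by the symplectic left inverse $S_\mathcal{V}$ from (\ref{eq:symplli}), and using the variational equation (\ref{eq:NAutSysG_FV}) to express $\partial_\varepsilon X(m\tau;z,\delta,\varepsilon)\vert_0$ as $\Pi\int_0^{m\tau}X_0^{-1}(t;z)\partial_x g(x_0(t;z),t;m\tau/l,0)X_0(t;z)\,dt$, Jacobi's formula combined with the symplectic-type identity $S_\mathcal{V}\Pi=B_\mathcal{V} S_\mathcal{V}$ (and cyclic permutation inside the trace) yields
\begin{equation*}
\det B_{\mathcal{V},\varepsilon}=1-\varepsilon\, m\tau\, v\, C_{\mathcal{V}}+O(\varepsilon^2).
\end{equation*}
If $C_{\mathcal{V}}<0$, then $\det B_{\mathcal{V},\varepsilon}>1$ for small $\varepsilon>0$; since the eigenvalues of $B_{\mathcal{V},0}$ sit on the unit circle by (A.2), continuity of the spectrum forces at least one eigenvalue of $B_{\mathcal{V},\varepsilon}$ to leave the closed unit disk.

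For case (i), the restricted matrix $B_{\mathcal{T}}$ carries a genuine $2\times 2$ Jordan block at $+1$ by (A.3), so I would work in a basis of $\mathcal{T}$ adapted to this block, with $JDH(z)$ spanning the kernel of $B_{\mathcal{T}}-I$ and a generator of the family $\mathcal{F}$ (available via the energy parametrization thanks to $T'(h)\neq 0$) spanning the generalised eigenspace. Writing $\mathrm{tr}\,B_{\mathcal{T},\varepsilon}=2+\varepsilon\alpha+O(\varepsilon^2)$ and $\det B_{\mathcal{T},\varepsilon}=1+\varepsilon\beta+O(\varepsilon^2)$, the discriminant of the $2\times 2$ characteristic polynomial equals $4\varepsilon(\alpha-\beta)+O(\varepsilon^2)$. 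When $\alpha-\beta>0$, the Floquet multipliers split along the real axis as $1\pm\sqrt{\varepsilon(\alpha-\beta)}+O(\varepsilon)$, with one exceeding $1$; when $\alpha-\beta<0$, they form a complex-conjugate pair of modulus $\sqrt{\det B_{\mathcal{T},\varepsilon}}=1+\varepsilon\beta/2+O(\varepsilon^2)$, which exceeds $1$ precisely when $\beta>0$. The determinant expansion from case (ii) with $v=1$ gives $\beta=-m\tau\,C_{\mathcal{T}}$, and a Melnikov-type calculation along the adapted basis should exhibit $\alpha-\beta$ as a strictly positive multiple of $-T'(h)\,M_{m:l}'(0)$. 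The two sub-cases of (i) then match exactly the two mechanisms just described.

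The main obstacle is the explicit evaluation of $\alpha$ for the tangent-space case: the derivative of $\mathrm{tr}\,B_{\mathcal{T},\varepsilon}$ couples the dissipative contribution already encoded in $C_{\mathcal{T}}$ with a conservative, purely shear contribution produced by the Jordan block acting on the $\varepsilon$-shift of the initial condition and period of $\mathcal{Z}_\varepsilon$. Extracting the shear contribution requires differentiating the family $X_0(T(h);z(h))$ with respect to $h$ to bring $T'(h)$ into play, and then identifying the resulting integral with $M_{m:l}'(0)$ through the relation between $DH$ along $\mathcal{Z}$ and $g$ present in (\ref{eq:MelFun}). Once this identification is secured, the discriminant analysis together with the eigenvalue perturbation bounds of \cite{Stewart1990} closes the argument immediately.
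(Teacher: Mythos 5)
Your strategy is the same as the paper's (split the perturbed monodromy matrix along the strongly invariant subspaces of $\Pi$, use a Jacobi-formula/determinant argument in each block, and a trace--determinant--discriminant analysis of the perturbed $2\times 2$ Jordan block in $\mathcal{T}$), but two essential steps are missing or incorrect as written. The first concerns your expansion of the monodromy matrix: $\partial_\varepsilon P|_{\varepsilon=0}$ is \emph{not} $\Pi\int_0^{m\tau}X_0^{-1}(t;z)\,\partial_x g\,X_0(t;z)\,dt$. Differentiating the variational equation (\ref{eq:NAutSysG_FV}) in $\varepsilon$ also produces the term generated by the $O(\varepsilon)$ deformation of the underlying orbit (initial condition and inhomogeneous response), namely $JD^3H\big(x_0(t;z),x_1(t;z,\tilde z)\big)$ pulled back by $X_0$ --- the term $\Psi^{m:l}_H$ of Lemma \ref{lem:pertexp}. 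This term generically enters $B_{\mathcal{V},\varepsilon}$ at first order, so your formula $\det B_{\mathcal{V},\varepsilon}=1-\varepsilon m\tau v\,C_{\mathcal{V}}+O(\varepsilon^2)$ does not follow from the computation you describe. It is nevertheless true, but only because $X_0^{-1}JD^3H(x_0,x_1)X_0$ has the form $J\hat A$ with $\hat A$ symmetric, and $\mathrm{trace}(S_{\mathcal V}J\hat A R_{\mathcal V})=0$ for any strongly invariant subspace (Lemmas \ref{lem:Jtrace} and \ref{lem:productperteigV} in the paper); this symplectic cancellation is exactly what must be proved before the determinant sees only the $\partial_x g$ contribution, i.e.\ $C_{\mathcal V}$.

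The second gap is the one you flag yourself as the ``main obstacle'': showing that $\alpha-\beta$ is a strictly positive multiple of $-T'(h)M_{m:l}'(0)$. This is not a routine identification to be deferred; it is the analytic core of statement \textit{(i)}. In the paper it amounts to proving that the $(2,1)$ entry of $S_{\mathcal T}\big(\Psi^{m:l}_H+\Psi^{m:l}_g\big)R_{\mathcal T}$, i.e.\ $\langle DH(z),\Psi\,JDH(z)\rangle$, equals $M_{m:l}'(0)$, with the basis $R_{\mathcal T}=[\,JDH(z)\;\;b(z)\,]$ satisfying $\Pi\,b(z)=b(z)-mT'(h)JDH(z)$, so that $\alpha-\beta=-mT'(h)M_{m:l}'(0)$. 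The proof of this identity needs $X_0(t;z)JDH(z)=JDH(x_0(t;z))$, the adjoint variational identity $D_t\big(DH(x_0)\big)=-DH(x_0)JD^2H(x_0)$, the time-differentiated form of the inhomogeneous problem (\ref{eq:FVare}), integration by parts and the $m\tau$-periodicity of $\langle DH(x_0),\dot x_1\rangle$; note that both $\Psi^{m:l}_H$ and $\Psi^{m:l}_g$ contribute to this entry, so the omission described above would also derail this computation. Your discriminant bookkeeping (real splitting $1\pm\sqrt{\varepsilon(\alpha-\beta)}$ when $T'(h)M_{m:l}'(0)<0$, complex pair of modulus $1-\varepsilon m\tau C_{\mathcal T}/2$ when $T'(h)M_{m:l}'(0)>0$) matches the paper, but without these two ingredients the argument is a correct plan rather than a proof.
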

We prove this theorem in the Appendix. In statement \textit{(ii)}, $\mathcal{V}$ can be simply chosen as $\mathbb{R}^{2n}$ so its volume contraction can be directly computed with Eq. (\ref{eq:divg}). To identify instability in the normal space, one can then analyze any $\mathcal{V}\subseteq \mathcal{N}$. 

\begin{remark}
	\label{rmk:formulas0} 
	Theorem \ref{thm:instability} provides analytic expressions that allow to assess instability of the perturbed orbit, at least for generic cases, solely depending on the conservative limit, its first variation and the perturbative vector field. From a geometric viewpoint, we can thus detect instabilities whenever the volume of a strongly invariant subspaces shows expansion under action of the perturbed flow. 
\end{remark}

\begin{remark}
	\label{rmk:saddlenode} 
	The conservative limit $\mathcal{Z}$ exhibits a saddle-node bifurcation if either $T'(h)$ or $M_{m:l}'(0)$ is equal to zero, and if high order conditions are satisfied \cite{Cenedese2019,GolubitskySchaeffer1985}. In this case, a Floquet multiplier of the perturbed orbit crosses the unit circle along the positive real axis.
\end{remark}

\begin{remark}
	\label{rmk:noexistence} 
	If the existence of a perturbed periodic orbit arising from $\mathcal{Z}$ is unknown, one may still use Proposition \ref{prop:gen_instability} or statement \textit{(ii)} of Theorem \ref{thm:instability} to conclude the instability of any perturbed orbit originating from $\mathcal{Z}$.
\end{remark}

\subsection{Conditions for asymptotic stability}
\label{sec:S4_2}

While instability can be detected from a single condition, asymptotic stability from the linearization requires, by definition, more conditions. The next theorem provides such a set of conditions.
\begin{theorem}
	\label{thm:astability} [\textbf{Sufficient conditions for stability}]. Assume that, other than two eigenvalues equal to $+1$, the remaining eigenvalues of $\Pi$ are $n-1$ distinct complex conjugated pairs. For $k=1,...,\,n-1$, denote by $\mathcal{N}_k\subset \mathcal{N}$ the strongly invariant subspace for $\Pi$ related to the $k$th pair of eigenvalues. Then, $\mathcal{Z}_{\varepsilon}$ is asymptotically stable for $\varepsilon>0$ small enough if each of the following conditions holds
	\begin{enumerate}[label=\textit{(\roman*)}]
	\item $T'(h)M_{m:l}'(0)>0$ and $C_{\mathcal{T}}>0$,
	\item $C_{\mathcal{N}_k}>0$ for all $k=1,...,\,n-1$.
	\end{enumerate} 
\end{theorem}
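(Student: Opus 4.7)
The plan is to establish asymptotic stability by verifying that every Floquet multiplier of $\mathcal{Z}_\varepsilon$ lies strictly inside the unit circle for $\varepsilon>0$ sufficiently small, after which standard Floquet theory concludes. Assumption (A.3) together with the distinct-eigenvalue hypothesis of the theorem makes $\mathcal{T},\mathcal{N}_1,\ldots,\mathcal{N}_{n-1}$ two-dimensional strongly invariant subspaces of $\Pi$ whose direct sum is $\mathbb{R}^{2n}$. By the persistence results of \cite{Stewart1990,Karow2014} already invoked in the paper, each deforms smoothly into a strongly invariant subspace of the perturbed monodromy $\Pi_\varepsilon$, so the Floquet multipliers split into groups and I can analyze each restricted block $B_{\mathcal{N}_{k,\varepsilon}}$ and $B_{\mathcal{T}_\varepsilon}$ separately.

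For each normal block, the unperturbed eigenvalues $e^{\pm i\theta_k}$ lie on the unit circle with $\theta_k\notin\{0,\pi\}$, so reality forces the perturbed pair to remain complex conjugate for small $\varepsilon$ and their common modulus equals $\sqrt{\det B_{\mathcal{N}_{k,\varepsilon}}}$. I would compute this determinant via Liouville's formula applied to the variational equation (\ref{eq:NAutSysG_FV}) restricted to the persistent subspace and expressed through the symplectic left inverse (\ref{eq:symplli}). Because $JD^2H$ is infinitesimally symplectic, its trace on every strongly invariant subspace vanishes and only the perturbation contributes at leading order, producing
\[
\det B_{\mathcal{N}_{k,\varepsilon}} = 1 - \varepsilon\,m\tau\, C_{\mathcal{N}_k} + O(\varepsilon^2),
\]
so assumption $(ii)$ gives $|\mu_k(\varepsilon)|<1$ for each $k$.

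The tangent block is the main obstacle, because the unperturbed spectrum on $\mathcal{T}$ is a single $2\times 2$ Jordan block at $+1$ and perturbation theory is non-regular, producing $\mu_\pm(\varepsilon)=1+O(\sqrt\varepsilon)$. I would fix a Jordan basis $\{v_1,v_2\}$ with $v_1=JDH(z)$ (a genuine eigenvector) and $v_2$ a generalized eigenvector solving $(\Pi-I)v_2=v_1$, both well defined by (A.3), and write
\[
B_{\mathcal{T}_\varepsilon}=\begin{pmatrix}1&1\\0&1\end{pmatrix}+\varepsilon\begin{pmatrix}a&b\\c&d\end{pmatrix}+O(\varepsilon^2).
\]
A direct expansion gives product $\mu_+\mu_-=\det B_{\mathcal{T}_\varepsilon}=1-\varepsilon\,m\tau\, C_{\mathcal{T}}+O(\varepsilon^2)$ (again by the Liouville argument above) and discriminant $4\varepsilon c+O(\varepsilon^2)$. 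Hence the two multipliers form a genuine complex conjugate pair — rather than splitting into a real pair, one necessarily outside the unit disk — iff $c<0$, and in that case $|\mu_\pm|^2=\det B_{\mathcal{T}_\varepsilon}$.

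The hardest step, and the one I expect to require the most care, is identifying $c$ with the Melnikov data. I would compute $c=(S_{\mathcal{T}}\,\partial_\varepsilon\Pi_\varepsilon|_0\,R_{\mathcal{T}})_{21}$ from the first-variation formula $\partial_\varepsilon\Pi_\varepsilon|_0=\Pi\int_0^{m\tau}X_0(t;z)^{-1}\partial_x g(x_0(t;z),t)X_0(t;z)\,dt$, observing that pairing with $v_1=JDH(z)$ reproduces the Melnikov integrand $\langle DH,g\rangle$, so the off-diagonal driving is controlled by $M_{m:l}'(0)$; converting from motion along the energy shell on which the family $\mathcal{F}$ is parametrized to displacement in the $v_2$ direction contributes a factor proportional to $T'(h)$. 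The outcome, parallel to the single-degree-of-freedom treatment in Chapter IV of \cite{GH1983} and to the existence proof in \cite{Cenedese2019}, is that $c$ is a negative multiple of $T'(h)M_{m:l}'(0)$. Assumption $(i)$ then delivers $c<0$ together with $C_{\mathcal{T}}>0$, whence $\mu_\pm=1\pm i\sqrt{|c|\varepsilon}+O(\varepsilon)$ satisfy $|\mu_\pm|^2<1$. Combined with the normal-block conclusion, every Floquet multiplier of $\mathcal{Z}_\varepsilon$ sits strictly inside the unit circle, so $\mathcal{Z}_\varepsilon$ is asymptotically stable.
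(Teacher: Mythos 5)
Your overall architecture is the same as the paper's: split the monodromy along the strongly invariant subspaces $\mathcal{T},\mathcal{N}_1,\dots,\mathcal{N}_{n-1}$, use the persistence of these subspaces under small perturbation, track the product of eigenvalues via a determinant/trace identity to get the modulus, and treat the degenerate Jordan block on $\mathcal{T}$ separately by examining the discriminant. The conclusion and the way you invoke Proposition A.1-type identities from \cite{Cenedese2019} also match.

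There is, however, a genuine gap in the computational plan. Your proposed first-variation formula
\[
\partial_\varepsilon\Pi_\varepsilon|_0=\Pi\int_0^{m\tau}X_0^{-1}\,\partial_x g\,X_0\,dt
\]
is incomplete: it would be correct only if the perturbed orbit $\mathcal{Z}_\varepsilon$ had the same initial condition and ran through the same points as $\mathcal{Z}$. Since $\mathcal{Z}_\varepsilon$ is displaced by an $O(\varepsilon)$ curve $x_1(t;z,\tilde z)$, there is an additional term $\Pi\Psi_H^{m:l}$ in the expansion of the monodromy (Lemma \ref{lem:pertexp}), built from $JD^3H$ evaluated on $x_1$. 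This extra term has zero trace against $S_\mathcal{V}(\cdot)R_\mathcal{V}$ because $\Psi_H^{m:l}=J\hat A$ with $\hat A$ symmetric (Lemmas \ref{lem:Jtrace} and \ref{lem:productperteigV}), so your determinant estimate and hence the moduli $|\mu_k|^2\approx 1-\varepsilon m\tau C_{\mathcal{N}_k}$ still come out right. But the off-diagonal entry $c=(S_\mathcal{T}\,\partial_\varepsilon P|_0\,R_\mathcal{T})_{21}$ is \emph{not} trace-like and receives a non-negligible contribution from $\Psi_H^{m:l}$. The paper's identification $a_{21}=M_{m:l}'(0)$ is obtained only after combining $\Psi_H^{m:l}$ with $\Psi_g^{m:l}$, differentiating the inhomogeneous variational ODE (\ref{eq:FVare}) in time, and integrating by parts using the adjoint identity (\ref{eq:DHFVadj}); dropping $\Psi_H^{m:l}$ the calculation does not produce $M_{m:l}'(0)$ at all. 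Your claim that pairing with $JDH(z)$ "reproduces the Melnikov integrand" also glosses over the difference between $\langle DH,\partial_x g\,JDH\rangle$ (what your formula gives) and $-\langle DH,\partial_t g\rangle$ (what $M_{m:l}'(0)$ actually equals).

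A smaller issue: your appeal to Liouville's formula to obtain $\det B_{\mathcal{V}_\varepsilon}$ is not rigorous as stated. Liouville controls the determinant of the full fundamental solution, not the determinant of its restriction to a persistent (but generally only $\varepsilon$-approximately invariant in time) subspace. The paper instead obtains this determinant by expanding $\det\bigl(B_\mathcal{V}(I+\varepsilon S_\mathcal{V}\Psi R_\mathcal{V})\bigr)$ with Jacobi's formula and then killing the $\Psi_H^{m:l}$ trace via Lemma \ref{lem:Jtrace}; that argument is what actually licenses your formula $1-\varepsilon m\tau C_\mathcal{V}+O(\varepsilon^2)$.
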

We prove this theorem in the Appendix. The conditions \textit{(i)} in Theorems \ref{thm:instability} and \ref{thm:astability} apply for systems with one degree of freedom. These conditions are consistent with the ones derived in \cite{GH1983,Yagasaki1996}, up to sign changes due to the shift $s$ present in the Melnikov function. For $n=1$, the divergence of the perturbation, cf. Eq. (\ref{eq:divg}), is sufficient to determine the volume contraction since $C_{\mathbb{R}^{2}} \equiv C_{\mathcal{T}}$.

Simple zeros of the Melnikov function typically appear in pairs moving in opposite directions, as in Fig. \ref{fig:S3_IM1}b. Thus, assuming a pair of simple zeros and positive volume contractions in Theorem \ref{thm:astability}, two periodic orbits bifurcate from $\mathcal{Z}$ for $\varepsilon>0$ small enough. One of them is unstable and the other asymptotically stable, as argued in the end of section \ref{sec:S3} in Fig. \ref{fig:S3_IM1}c. This analytical conclusion matches with the results of several experimental and numerical studies present in the literature. Figure \ref{fig:S4_IM1} summarizes our results on stability showing the perturbation (in red) of the Floquet multipliers of the conservative limit (in green).

\begin{remark}
	\label{rmk:bifpert} 
	If $C_{\mathcal{N}_k}=0$, then $\mathcal{Z}_{\varepsilon}$ generically undergoes a Neimark-Sacker or torus bifurcation \cite{Arnold1988,Kuznetsov1995,MO2017}. High-order nondegeneracy conditions have to be satisfied, but this bifurcation implies that a resonant torus appears near $\mathcal{Z}_{\varepsilon}$. As they might be attractors for system (\ref{eq:NAutSysG}), their identification is relevant for the frequency response, as discussed in \cite{Amabili2007,Renson2015,Kuether2015}.
\end{remark}

\begin{remark}
	\label{rmk:othercases} 
	Theorem \ref{thm:astability} does not discuss in detail cases in which $\Pi$ admits either $-1$ as an eigenvalue or a repeated complex conjugated pairs. These configurations indicate that $\mathcal{Z}$ may be an orbit at which a period doubling or a Krein bifurcation, respectively, occurs \cite{MO2017}. One may still provide analytic expressions to evaluate the stability of $\mathcal{Z}_{\varepsilon}$, but we leave the discussion of these non-generic cases to dedicated examples.
\end{remark}
\begin{figure}[t]
	\centering
	\includegraphics[width=1\textwidth]{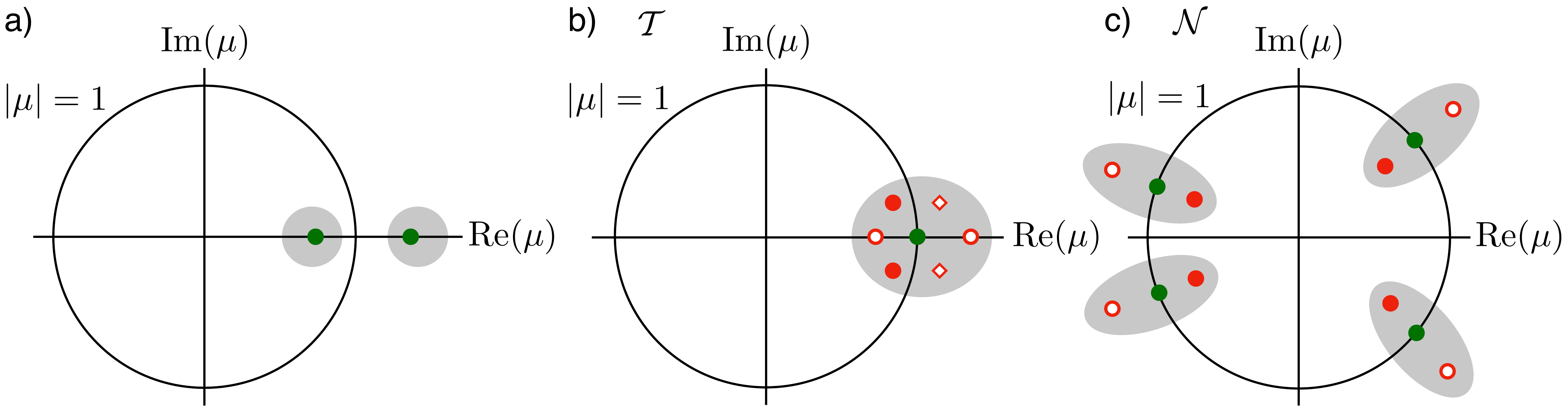}
	\caption{Summary of our stability results in terms of the perturbation (in red) of the Floquet multipliers of the conservative limit (in green). Grey shaded areas show possible perturbation zones. Plot (a) refers to Proposition \ref{prop:gen_instability} where the conservative limit has a real pair $(\mu,1/\mu)$ of Floquet multipliers. In contrast, plot (b) refers to the tangent space and illustrates the conditions \textit{(i)} of Theorems \ref{thm:instability} and of \ref{thm:astability} (here with red dots). In the former case, two different white-faced markers are used for the two possible instabilities. For the normal space, plot (c) shows the conditions \textit{(ii)} of Theorems \ref{thm:instability} (with red rings) and of \ref{thm:astability} (with red dots).}
	\label{fig:S4_IM1}
\end{figure}

\subsection{Determining contraction measures}
\label{sec:S4_3}

Especially for system with a large number of degrees of freedom, the volume contraction (or nonlinear damping rate) formula for $C_\mathcal{V}$ when $\mathcal{V}\subset\mathbb{R}^{2n}$ may be difficult to evaluate due to the presence of $X_0$, its inverse and the required subspace identification. Regarding the perturbative vector field as defined in Eq. (\ref{eq:NAutSysG_FV}), the following proposition illustrates the simple case of uniform volume contraction.
\begin{proposition}
\label{prop:uniformcontraction}
Let $\mathcal{V}$ be a strongly invariant subspace for $\Pi$. If $\partial_q Q(q,F(q,p),t;m\tau/l,0)$ is a symmetric matrix-valued function and $\partial_p Q(q,F(q,p),t;m\tau/l,0)=-\alpha I$, then $C_{\mathcal{V}} =\alpha$.
\end{proposition}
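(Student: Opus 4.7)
The strategy is to split $\partial_x g$ along the orbit into a scalar multiple of the identity plus a Hamiltonian matrix, and then to use the symplecticity of $X_0(t;z)$ together with an algebraic identity for the symplectic left inverse $S_\mathcal{V}$ to kill the Hamiltonian contribution in the integrand of~(\ref{eq:contrdef}).

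The first step is a direct block-matrix calculation showing that the hypotheses force
\begin{equation*}
\partial_x g(x_0(t;z),t;m\tau/l,0) \;=\; -\tfrac{\alpha}{2}\, I_{2n\times 2n} \;+\; J\,K(t),
\end{equation*}
where $K(t)$ is a symmetric $2n\times 2n$ matrix for every $t$. Indeed, by the block form (\ref{eq:FGDef}) of $g$, the Jacobian $\partial_x g$ has its upper $n$ rows vanishing and its lower $n$ rows equal to $(\partial_q Q,\,\partial_p Q) = (A(t),\,-\alpha I)$; adding $\tfrac{\alpha}{2} I_{2n\times 2n}$ and multiplying on the left by $-J$ produces a $2\times 2$ block matrix whose off-diagonal blocks are both $(\alpha/2) I$ and whose diagonal blocks are $-A(t)$ and $0$, which is symmetric precisely because $A(t)$ is.

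Next, since $X_0(t;z)$ is symplectic, conjugation preserves the Hamiltonian form: $X_0^{-1} J K X_0 = J \tilde K$ with $\tilde K(t) = X_0(t;z)^\top K(t)\, X_0(t;z)$ again symmetric. Writing out $S_\mathcal{V}$ from (\ref{eq:symplli}), the computation
\begin{equation*}
\mathrm{trace}\bigl(S_\mathcal{V}\, J\tilde K\, R_\mathcal{V}\bigr)
\;=\; \mathrm{trace}\bigl(\Omega^{-1}\, R_\mathcal{V}^\top J^2 \tilde K\, R_\mathcal{V}\bigr)
\;=\; -\,\mathrm{trace}\bigl(\Omega^{-1}\, R_\mathcal{V}^\top \tilde K\, R_\mathcal{V}\bigr),
\end{equation*}
with $\Omega = R_\mathcal{V}^\top J R_\mathcal{V}$, reduces the Hamiltonian contribution to the trace of an antisymmetric matrix (namely $\Omega^{-1}$, since $\Omega$ is antisymmetric and invertible by well-definedness of $S_\mathcal{V}$) times a symmetric one. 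Any such trace vanishes, so this contribution is zero pointwise in $t$.

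For the remaining scalar piece, $\mathrm{trace}\bigl(S_\mathcal{V} X_0^{-1}(-\tfrac{\alpha}{2} I) X_0 R_\mathcal{V}\bigr) = -\tfrac{\alpha}{2}\,\mathrm{trace}(S_\mathcal{V} R_\mathcal{V}) = -\tfrac{\alpha}{2}\cdot 2v = -\alpha v$, because $S_\mathcal{V} R_\mathcal{V} = I_{2v}$. Substituting into~(\ref{eq:contrdef}) yields $C_\mathcal{V} = \alpha$, independently of $\mathcal{V}$. The only non-routine step is spotting the Hamiltonian-plus-scalar-identity decomposition of $\partial_x g$, which is forced exactly by the symmetry of $\partial_q Q$ and the simple form of $\partial_p Q$; the rest is symplectic linear algebra.
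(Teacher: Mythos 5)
Your proof is correct, and it reaches the result by a slightly different route than the paper. You decompose the perturbation Jacobian into conformally Hamiltonian form, $\partial_x g = -\tfrac{\alpha}{2}I_{2n} + JK(t)$ with $K(t)$ symmetric (your block computation of $K$ is right), so that the hypothesis $\partial_p Q=-\alpha I$ is used up front to make everything except a scalar multiple of the identity Hamiltonian; you then kill the Hamiltonian part pointwise by noting that symplectic conjugation preserves the form $J\times(\text{symmetric})$ and that $\mathrm{trace}\bigl(\Omega^{-1}\,R_\mathcal{V}^\top \tilde K R_\mathcal{V}\bigr)=0$ as the trace of an antisymmetric matrix times a symmetric one, and finish with $S_\mathcal{V}R_\mathcal{V}=I_{2v}$. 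The paper instead splits $\partial_x g = JA_q(t)+A_p(t)$, with $A_q$ carrying $\partial_q Q$ and $A_p$ carrying $\partial_p Q$, and invokes its trace-symmetrization identity (Lemma \ref{lem:Jtrace}, Eq. (\ref{eq:traceeq})): the $A_q$ term drops out by symmetry of $\partial_q Q$, and the $A_p$ term is reduced to $A_p-JA_p^\top J=\mathrm{diag}(\partial_p Q^\top,\partial_p Q)$ before the uniform form $\partial_p Q=-\alpha I$ is substituted at the very end. The underlying mechanism is the same (Hamiltonian blocks contribute nothing to $\mathrm{trace}(S_\mathcal{V}\cdot R_\mathcal{V})$, and symplecticity of $X_0$ lets this survive the conjugation), but your version is self-contained — you re-derive the needed vanishing directly rather than via the lemma — and is arguably cleaner for this specific statement, exhibiting explicitly the conformally symplectic structure the paper alludes to; the paper's split keeps the $\partial_p Q$ block general for one more step, which would be the natural starting point for non-uniform damping, and reuses a lemma it needs elsewhere in the appendix anyway.
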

We prove this statement in the Appendix. In the Hamiltonian literature, the assumptions of Proposition \ref{prop:uniformcontraction} hold for conformally symplectic flows \cite{McLachlan2001,Calleja2013} under appropriate forcing.

For example, the condition of uniform contraction is satisfied in mechanical systems with $G_1\equiv 0$ when the leading-order perturbation terms are pure forcing and Rayleigh-type dissipation proportional to the mass matrix, i.e. $Q(q,\dot{q},t;\gamma,0) = f(t;\gamma) - \alpha M(q)\dot{q}$. However, such uniform volume contraction is an overly simplified damping model for practical applications and hence may only be relevant for numerical experiments.

\section{Examples}
\label{sec:S5}
%,
\subsection{Subharmonic response in a gyroscopic system}
\label{sec:S5_1}
The equations of motion of the two-degree-of-freedom system in Fig. \ref{fig:S5_IM1} read
\begin{equation}
\begin{array}{c}
\displaystyle m_b\ddot{q} + 2G\dot{q} -m_b\Omega^2q+DV(q) = \hat{Q}(q,\dot{q},t) ,  \\ 
\displaystyle G=m_b\begin{bmatrix}0 & -\Omega \\ \Omega & 0 \end{bmatrix},  \,\,\,\,\, V(q)=\frac{1}{2} \sum_{j=1}^4 k_j\big(l_j(x,y)-l_{0}\big)^2,  \\ 
\displaystyle l_{1,3}(x,y) = \sqrt{(l_0\pm x)^2+y^2}  ,  \,\,\,\,\,   l_{2,4}(x,y) = \sqrt{x^2+(l_0\pm y)^2},
\end{array}
\end{equation}
where $q=(x,y)^\top$ are the generalized coordinates with respect to a reference frame rotating with constant angular velocity $\Omega$. We assume that the Lagrangian component $\hat{Q}$ contains all the small, non-conservative forces acting on the system as follows:
\begin{equation}
\hat{Q}(q,\dot{q},t) = \varepsilon \big(Q_{d,\alpha}(q,\dot{q}) + Q_{d,\beta}(q,\dot{q}) +Q_{f}(t) \big),
\end{equation}
\begin{itemize}
	\item uniform dissipation linearly depending on the absolute velocities of the mass $m_b$
	\begin{equation}
	\varepsilon Q_{d,\alpha}(q,\dot{q}) = -\varepsilon \alpha m_b (\dot{q}+m_b^{-1}G q );
	\end{equation}
	\item stiffness-proportional dissipation for the spring-damper elements, i.e. $c_j=\varepsilon \beta k_j$ for $j=1,... \,4$ and 
	\begin{equation}
	\begin{array}{c}
	\displaystyle \varepsilon Q_{d,\beta}(q,\dot{q}) = -\varepsilon \beta C(q)\dot{q},   \\ \\
	\displaystyle C(q)=\sum_{j=1}^4 k_j \begin{bmatrix} \big(\partial_x l_j(x,y)\big)^2 & \partial_x l_j(x,y)\partial_y l_j(x,y) \\ \partial_x l_j(x,y)\partial_y l_j(x,y)  & \big(\partial_y l_j(x,y)\big)^2 \end{bmatrix} ;  
	\end{array}
	\end{equation}
	\item mono-harmonic forcing of frequency $l\Omega$
	\begin{equation}
	\varepsilon Q_{f}(t) = \varepsilon e \begin{pmatrix} +\cos(l \Omega t) \\ - \sin(l \Omega t)  \end{pmatrix},\,\,\,\,\,\, l\in\mathbb{N}.
	\end{equation}
\end{itemize}
This simple model with strong geometric nonlinearities finds application in the fields of rotordynamics \cite{Ishida2012} or gyroscopic MEMS \cite{Younis2011}. Here, sinusoidal forces whose frequencies clock at multiple of the rotating angular frequencies either appear due to diverse effects \cite{Ehrich1988,Hamed2016} (e.g. asymmetries, nonrotating loads or multiphysical couplings) or are purposefully inserted in the system. From a physical standpoint \cite{Crandall1970}, the dissipation controlled by the coefficient $\alpha$ models, for example, radiation damping, while $\beta$ governs material or structural damping.

Introducing the transformation $p = m_b\dot{q} + Gq$, the Hamiltonian for the conservative limit takes the form
\begin{equation}
H(q,p) = \frac{1}{2m_b}\langle p,p \rangle - \frac{1}{m_b} \langle p,Gq \rangle+V(q).
\end{equation}
Under the assumption $m_b=1$, the equivalent, first-order equations for the two-degree-of-freedom system in Fig. \ref{fig:S5_IM1} are
\begin{equation}
\begin{array}{cc}
\begin{aligned}
\label{eq:mechsys51}
\displaystyle\dot{q}  &  =  - Gq + p , \\
\displaystyle \dot{p} &  =  - DV(q) - G p + \varepsilon \big( Q_f(t)-\alpha p -\beta C(q) (p - Gq) \big) .
\end{aligned}
\end{array}
\end{equation}
For our analysis, we further assume that $\Omega = 0.942$, $l_0=1$, $k_1 = 1$, $k_2 = 4.08$ , $k_3 = 1.37$ , $k_4 = 2.51$ and $e = 1$.
\begin{figure}[t]
	\centering
	\includegraphics[width=1\textwidth]{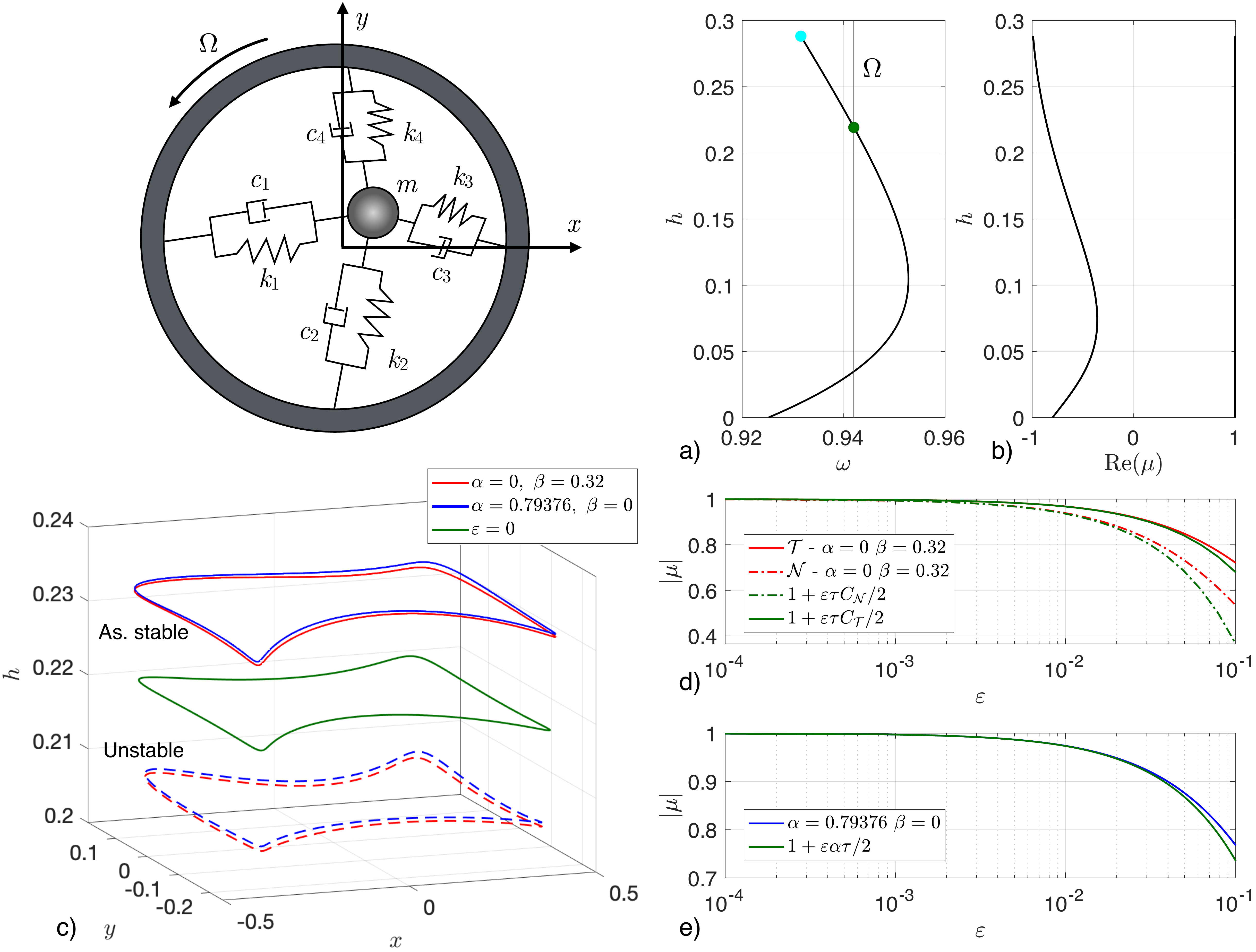}
	\caption{Plot (a) shows the conservative backbone curve in terms of frequency and value of the first integral for the family of periodic solutions for the dynamical system (\ref{eq:mechsys51}), while plot (b) the real part of the two couple of Floquet multiplier of the family. The green dot in plot (a) corresponds to the periodic orbit illustrated in plot (c) in terms of the coordinates $(x,y)$ and the value of the first integral. This plot also shows the stable (solid lines) and unstable (dashed lines) periodic orbits bifurcating from the conservative limit at $\varepsilon=0.01$. The blue lines indicate the orbit for $\alpha = 0.76376$ and $\beta = 0$, while red ones for $\alpha = 0$ and $\beta = 0.32$. With consistent colors, plots (d) and (e) show the evaluation of the absolute value of the Floquet multipliers whose analytic predictions are depicted in green. In plot (d), solid lines refers to the tangent space $\mathcal{T}$, while dashed-dotted ones to the normal space $\mathcal{N}$.}
	\label{fig:S5_IM1}
\end{figure}
We begin with the study of the conservative limit ($\varepsilon=0$), in which the origin is an equilibrium with the non-resonant linearized frequencies $(0.92513,\,3.1431)$. Hence, according to the Lyapunov subcenter theorem \cite{Lyapunov1992}, two families of periodic orbits (nonlinear normal modes) emanate from the origin; we focus our attention on the one related to the slowest linearized frequency. By performing numerical continuation, we obtain the periodic orbit family of Fig. \ref{fig:S5_IM1}a (backbone curve) described in terms of the oscillation frequency $\omega = 2\pi/\tau$ and the value of the first integral $h$. We also plot the real part of the Floquet mutipliers along the family in Fig. \ref{fig:S5_IM1}b. We stop continuation at the cyan point in Fig. \ref{fig:S5_IM1}a where a period doubling bifurcation occurs. 

The backbone curve in Fig. \ref{fig:S5_IM1}a crosses two times the vertical line marking the rotating angular velocity $\Omega$. We concentrate on the high-energy crossing point (depicted with a green dot), where the family shows a softening trend, i.e., $T'>0$ holds at this location. The corresponding periodic orbit is 1-normal and satisfies assumptions (A.2) and (A.3) of section \ref{sec:S3}.Moreover, this trajectory features a non-negligible third harmonic so that $1:3$ resonances with external forcing may occur, depending on the damping strength. Therefore, we fix $l=3$ and we study forced-damped periodic orbits that may survive from this high-energy crossing point for the two damping mechanisms we have in our model.

For $\alpha = 0.76376$ and $\beta = 0$, the Melnikov function (\ref{eq:MelFun}) evaluated for this periodic orbit reads
\begin{equation}
\label{eq:Mfunex1}
M_{1:3}(s) = 1.4402\cos(3\Omega s)- 1.1553.
\end{equation}
This function has six simple zeros, but, as shown in \cite{Cenedese2019}, they correspond to two perturbed orbits that occur as the amplitude of the work done by the forcing is greater than the dissipated energy, when evaluated at the conservative limit. Specifically, the zeros featuring a negative $M_{1:3}'(s)$ are related to an unstable orbit according to Theorem \ref{thm:instability}, while the others have a positive Melnikov-function derivative so that, due to Proposition \ref{prop:uniformcontraction} and to Theorem \ref{thm:astability}, they signal an asymptotically stable periodic orbit. Along with the conservative limit in green, we plot these perturbed orbits using blues lines in Fig. \ref{fig:S5_IM1}c (solid for the asymptotically stable and dashed for the unstable) that have been obtained by setting $\varepsilon=0.01$ in a direct numerical simulation with the periodic orbit toolbox of \textsc{coco} \cite{Dankowicz2013}. Qualitatively speaking, these periodic orbits are very similar to that of the conservative limit, but the average value of the first integral along them is higher (asymptotically stable orbit) or lower (unstable one).

By setting the damping values $\alpha = 0$ and $\beta = 0.32$, one retrieves the same Melnikov function as in Eq. (\ref{eq:Mfunex1}). In particular, the dissipated energy is equal to the case $\alpha = 0.76376$ and $\beta = 0$. Thus, again, a stable and an unstable periodic orbit bifurcates from the conservative limit since the volumte contractions are both positive. Again, direct numerical simulations with $\varepsilon=0.01$ verify our predictions: we plot the asymptotically stable and unstable periodic orbits in Fig. \ref{fig:S5_IM1}c with red solid and red dashed lines, respectively.

The nonlinear damping rates (or volume contractions) $C_\mathcal{V}$ can be used to estimate the Floquet multipliers of perturbed solutions. As shown in the proofs reported in the Appendix, the absolute value of a complex conjugated pair of eigenvalues arising from the perturbation of a strongly invariant subspace $\mathcal{V}$ reads
\begin{equation}
|\mu| = \sqrt{1 - \varepsilon \tau C_\mathcal{V}+ o(\varepsilon)}=1-\varepsilon \frac{\tau}{2}C_\mathcal{V} + o(\varepsilon).
\end{equation}
We illustrate these predictions for the asymptotically stable orbits of Fig. \ref{fig:S5_IM1}c. The green lines in Fig. \ref{fig:S5_IM1}d show these estimates for $\mathcal{T}$ (solid line) and $\mathcal{N}$ (dashed-dotted line) that are in good agreement with the multipliers computed within the perturbed system for the case $\alpha = 0$ and $\beta = 0.32$, plotted in red. Figure \ref{fig:S5_IM1}e represents the analogous curves for the case $\alpha = 0.76376$ and $\beta = 0$. Here, $C_\mathcal{T}=C_\mathcal{N}=\alpha$ and the green line depicts predictions from the conservative limit, while the blue one shows results from simulations in the forced-damped setting. We remark that, even though the dissipated energy is the same, stiffness-related damping provides higher volume contraction values with respect to the case of uniform damping. As expected, for high values of $\varepsilon$, our first-order computations may not be sufficient to adequately estimate the modulus of the perturbed multipliers.

For the low-energy crossing point between the backbone curve and the $\Omega$ vertical line, the Melnikov function is always negative. Therefore, no perturbed solution arises from this conservative limit for sufficiently small $\varepsilon$.

\subsection{Isolated response due to parametric forcing}
\label{sec:S5_2}
In this section, we study the parametrically-forced, three-degree-of-freedom system of nonlinear oscillators shown in Fig. \ref{fig:S5_IM2}. Parametric forcing \cite{NayfehM2007} finds notable applications in the field of MEMS \cite{Rhoads2005,Younis2011}. By assuming a linear damping proportional to the mass matrix and unit masses, the equations of motion in Hamiltonian form read
\begin{equation} 
\label{eq:systS52}
\begin{array}{cc}
\begin{aligned}
\dot{q}   & = p, \\
\dot{p}_1 & =  -k(q_1-q_2) - k/3 q_1 - a q_1^2 - b q_1^3 -\varepsilon \alpha p_1,\\ 
\dot{p}_2 & =  -k(q_2-q_1) -k(q_2-q_3) -\varepsilon \alpha p_2, \\
\dot{p}_3 & =  -k(q_3-q_2) +\varepsilon (q_3f(t;\Omega)-\alpha p_3), 
\end{aligned} &
\displaystyle f(t;\Omega) = \frac{4}{\pi}\sum_{j=1}^{3} \frac{1}{2j-1} \sin\big((2j-1)\Omega t \big),
\end{array}
\end{equation}
where $q,p\in\mathbb{R}^3$, $k=1$, $a =-1/2 $, $b=1$ and $\alpha,\varepsilon >0$. The nonlinear behavior in this example arises from the material nonlinearity of the left-most spring in Fig. \ref{fig:S5_IM2}. We expect the appearance of isolas in the frequency response, at least for small $\varepsilon$. Indeed, as the forcing amplitude is controlled by $q_3$, it is necessary to exceed a threshold on the motion amplitude for the work done by the forcing to overcome energy dissipation by the damping.
\begin{figure}[t]
	\centering
	\includegraphics[width=1\textwidth]{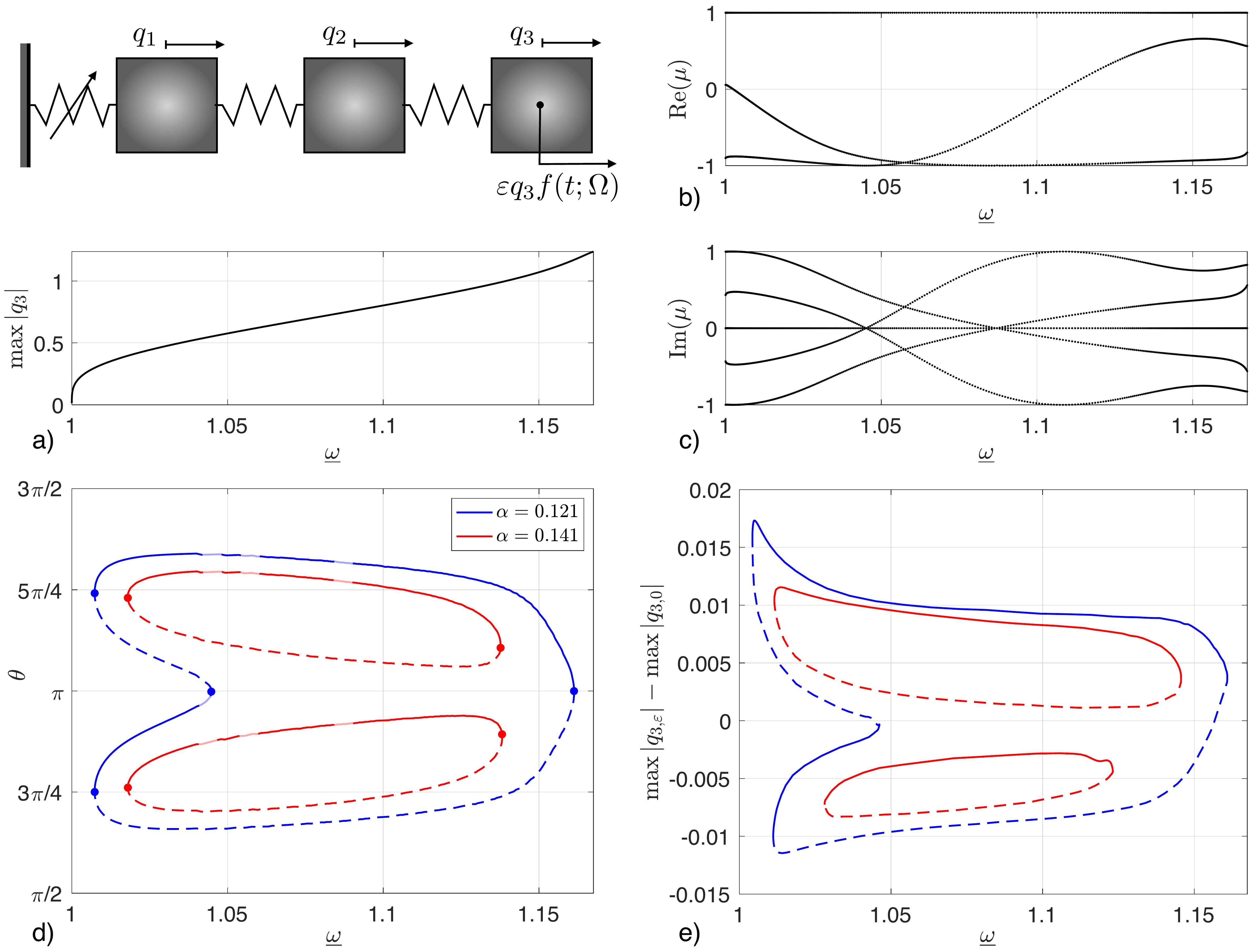}
	\caption{Plot (a) shows the backbone curve in terms of the normalized frequency $\underline{\omega}$ and the amplitude of the first family of periodic solutions for the conservative limit of system (\ref{eq:systS52}). The trend of Floquet multipliers of the family is shown in plot (b) in terms of their real part, while plot (c) regards the imaginary one. Plot (d) illustrates the level set of the Melnikov function for two values of the damping in the plane $(\underline{\omega},,\theta)$, where the latter is the orbit phase. Here, dashed lines predict unstable perturbed orbits, while solid ones indicate asymptotically stable ones. The latter feature faded regions at which the predictions of Theorem \ref{thm:astability} could turn out to be weak. The dots instead denote saddle-node bifurcations. Plot (e) shows the numerical simulations of the frequency response for the perturbed system at $\varepsilon=0.0025$. The horizontal axis displays the normalized frequency and the vertical one the distance from the conservative backbone curve.}
	\label{fig:S5_IM2}
\end{figure}

For the conservative limit of the system, the origin is the unique fixed point\footnote{The origin is the only point at which the potential is stationary. This condition is expressed by t{}he following equations: \begin{equation*} q_3=q_2,  \,\,\,\,\, q_2=q_1,  \,\,\,\,\, q_1(q_1^2-q_1/2+1/3)=0. \end{equation*}} and no resonances occur among its linearized frequencies $(0.30394,\,1.0854,\,1.7501)$. Thus, three families of periodic orbits emanate from the origin \cite{Lyapunov1992}, and they can be parametrized with the value of the first integral $h$. We focus on detecting perturbed solutions arising from the lowest-frequency family. We denote by $\underline{\omega}(h)$ the frequency of the periodic orbits in this family normalized by the linear limit at $h=0$, i.e. $\underline{\omega}(h)=T(0)/T(h)$. When performing numerical continuation, the first family is 1-normal and satisfies assumptions (A.2) and (A.3) of section \ref{sec:S3} for $1<\underline{\omega}(h)<1.165$, which is our frequency range of interest. The backbone curve for this family is illustrated in Fig. \ref{fig:S5_IM2}a in terms of the normalized frequency and the maximum amplitude of the coordinate $q_3$ along periodic orbits, denoted by $\max |q_3|$. This backbone curve displays a hardening trend, i.e., $\underline{\omega}'(h)>0$, $T'(h)<0$. Figures \ref{fig:S5_IM2}b and \ref{fig:S5_IM2}c respectively show the trend of real and imaginary parts of the Floquet multipliers of the family.  

We select a perturbation in Eq. (\ref{eq:systS52}) to satisfy the assumptions of Proposition \ref{prop:uniformcontraction}, so that the volume contractions $C_\mathcal{V}$ are always positive. Moreover, the forcing corresponds to the sixth harmonic approximation of a square wave with unit amplitude and period $2\pi/\Omega$. We now examine via our Melnikov approach perturbed periodic orbits when the forcing period is in $1:1$ resonance with the period of the orbits of the first family, thus we set $\Omega=2\pi/T(h)$. Sweeping through the family, we evaluate the Melnikov function on every orbit and hence construct a scalar function $M_{1:1}(\theta,h,\alpha)$, using the phase $\theta = 2\pi s / T(h)$ instead of the shift $s$.

Figure \ref{fig:S5_IM2}d shows the zero level set of $M_{1:1}(\theta,h,0.121)$, in blue, and of $M_{1:1}(\theta,h,0.141)$, in red, plotted in the plane $(\theta,\underline{\omega})$. Solid lines indicate zeros in $\theta$ with $\partial_\theta M_{1:1} < 0$, while dashed ones indicate zeros with $\partial_\theta M_{1:1} > 0$. According to Theorem \ref{thm:instability}, the latter zeros predict unstable perturbed periodic orbits, while the former predict asymptotically stable ones Theorem \ref{thm:astability}. However, there are conservative orbits of the family that either feature a pair of Floquet multipliers related to the normal space equal to $-1$, or have two coincident complex conjugated pairs of Floquet multipliers (cf. Figs. \ref{fig:S5_IM2}b,c). At these resonances, Theorem \ref{thm:astability} is not applicable and hence the prediction of asymptotic stability could fail in the vicinity of these orbits, shown with faded solid lines in Fig. \ref{fig:S5_IM2}d.  Moreover, the dots in Fig. \ref{fig:S5_IM2}d depict quadratic zeros with respect to $\theta$ of $M_{1:1}$ at which saddle-node bifurcations occur.

From lower to higher frequencies for $\alpha=0.121$, the Melnikov has two quadratic zeros when $\Omega\approx1.01$. Aferwards, they evolve as four simple zeros, then the internal pair collapses in a quadratic zero and the remaining zeros persist until $\Omega(h)\approx 1.158$. For this value of $\alpha$, the multiharmonic, parametric forcing of Eq. (\ref{eq:systS52}) generates an inverse cup-shaped isolated response curve. In contrast, two disjoint isolas exist when damping is increased at $\alpha=0.141$. These predictions are confirmed via direct numerical simulations of the perturbed system in Fig. \ref{fig:S5_IM2}e for $\varepsilon=0.0025$, plotted with corresponding colors. Here we depict the frequency response by showing the distance, in terms of the amplitude $\max |q_3|$, to the conservative (isochronous) limit. When interpreting these results, one has to recall from \cite{Cenedese2019} that the Melnikov function is the leading-order approximation of the bifurcation function governing the persistence problem. Hence, the level sets in Fig. \ref{fig:S5_IM2}b are approximate ones and, in particular, the symmetric appearance of zeros will be, generically, destroyed in the full bifurcation function.

\section{Conclusion}
We have developed an analytical approach to determine the stability of forced-damped oscillations of nonlinear, multi-degree-of-freedom mechanical systems. Specifically, by studying these motions as perturbations from conservative backbone curves, we have complemented the existence results of \cite{Cenedese2019} with further results on the stability of the periodic forced-damped response. Other than the Melnikov function (which also predicts persistence), the frequency variation within the limiting conservative periodic orbits and the nonlinear damping rates (or volume contractions) play a role in the assessment of stability. These damping rates provide estimates for the Floquet multipliers of the forced-damped response, thereby predicting stability of perturbed trajectories from their conservative limit. 

After proving the method in a general setting, we verified our analytical predictions on two specific examples. In the first, we studied subharmonic resonances with external forcing in a gyroscopic two-degree-of-freedom system considering two damping mechanisms. Even though these latter dissipate the same amount of energy along the conservative limit, their stability indicators (and potentially their basins of attraction) are different. In the second example, we considered the case of parametric forcing on a three-degree-of-freedom oscillator with mass-proportional damping. By considering multi-harmonic excitation, we have successfully described the generation and the stability of periodic trajectories that lie on exotic isolas of the frequency response.

When the mechanical system has only one degree of freedom, the conditions we derived coincide with prior analyses of \cite{GH1983,Yagasaki1996}. Our results are also consistent with numerical and experimental observations reported in available studies \cite{Touze2006,Peeters2011a,Peeters2011b,Ehrhardt2016,Renson2016b,Hill2017,Szalai2017,Kovacic2008,Avramov2008,Ponsioen2019,Vakakis1997,Vakakis2001,Vakakis2008,Avramov2011,Avramov2013,Kerschen2014}, in that they are able to explain hysteresis of frequency responses in mechanical systems as well as other stability or bifurcation phenomena. From a numerical perspective, being able to make predictions based on the conservative limit alone can lead to major savings in computational time when analyzing weakly-damped systems with a large number of degrees of freedom. Moreover, our analytic stability criteria may help in overcoming potential issues with numerical methods, such as computational complexity and convergence problems.

\begin{appendix}
%\numberwithin{equation}{section}
\section{Proofs of the main results}
\label{app:A1}
In this section, we adopt the assumptions of section \ref{sec:S4} and derive an approximation for the Floquet multipliers of the non-autonomous periodic orbit $\mathcal{Z}_{\varepsilon}$, to be used for the stability assessment \cite{GH1983,Chicone2000,Teschl2012}. This orbit solves system (\ref{eq:NAutSysG}) with $\delta(\varepsilon)=m\tau/l+\varepsilon \tilde{\tau} +o(\varepsilon)$ and has initial condition $\xi(\varepsilon)=z+\varepsilon \tilde{z}+o(\varepsilon)$.

We aim at analyzing the evolution of the Floquet multipliers of $\mathcal{Z}_{\varepsilon}$ from those of its conservative limit $\mathcal{Z}$. Since we rely on the linearized flow and look at the $O(\varepsilon)$-perturbation as in the next lemma, our stability assessment is valid in a small neighborhood of $z$ and for $\varepsilon$ small enough.
\begin{lemma}
\label{lem:pertexp}
The monodromy matrix $P(\varepsilon)=X(l\delta(\varepsilon);\xi(\varepsilon),T(\varepsilon),\varepsilon)$ of $\mathcal{Z}_{\varepsilon}$ is approximated at order $O(\varepsilon^2)$ by the following expansion
\begin{equation}
\label{eq:monZeexp}
P(\varepsilon)=\Pi+\varepsilon \Pi\left(\Psi^{m:l}_H+\Psi^{m:l}_g\right)+O(\varepsilon^2),
\end{equation}
where
\begin{equation}
\label{eq:FVareSol}
\begin{split}
\Psi^{m:l}_H &= \displaystyle \int_0^{m\tau}X^{-1}_0(t;z)JD^3H(x_0(t;z),x_1(t;z,\tilde{z}))X_0(t;z)dt, \\ \\
x_1(t;z,\tilde{z}) &=\displaystyle X_0(t;z)\tilde{z}+X_0(t;z)\int_0^{t}X^{-1}_0(s;z)g(x_0(s;z),s;m\tau/l,0) ds, \\ \\
\Psi^{m:l}_g &=\displaystyle \int_0^{m\tau}X^{-1}_0(t;z)\partial_x g(x_0(t;z),t;m\tau/l,0)X_0(t;z)dt.
\end{split}
\end{equation}
\end{lemma}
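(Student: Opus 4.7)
The plan is to Taylor-expand both the trajectory of $\mathcal{Z}_\varepsilon$ and its first-variation flow about the conservative limit in powers of $\varepsilon$ and to match orders in the governing equations~(\ref{eq:NAutSysG}) and~(\ref{eq:NAutSysG_FV}). The smoothness hypotheses of Section~\ref{sec:S2}, together with the classical theorem on smooth dependence of ODE solutions on initial conditions and parameters, license such expansions with controlled remainders.

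First I would write $x(t;\xi(\varepsilon),\delta(\varepsilon),\varepsilon)=x_0(t;z)+\varepsilon x_1(t)+O(\varepsilon^2)$, substitute it into equation~(\ref{eq:NAutSysG}), and use $\dot x_0=JDH(x_0)$ together with the expansions $JDH(x_0+\varepsilon x_1)=JDH(x_0)+\varepsilon JD^2H(x_0)x_1+O(\varepsilon^2)$ and $g(x_0+\varepsilon x_1,t;m\tau/l+\varepsilon\tilde\tau,\varepsilon)=g(x_0,t;m\tau/l,0)+O(\varepsilon)$. Matching at $O(\varepsilon)$ gives the linear inhomogeneous equation $\dot x_1=JD^2H(x_0(t;z))\,x_1+g(x_0(t;z),t;m\tau/l,0)$ with $x_1(0)=\tilde z$, and since $X_0(t;z)$ is a fundamental matrix of its homogeneous part, variation of parameters reproduces exactly the formula for $x_1(t;z,\tilde z)$ stated in the lemma.

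Second I would set $X(t;\xi(\varepsilon),\delta(\varepsilon),\varepsilon)=X_0(t;z)+\varepsilon X_1(t)+O(\varepsilon^2)$ with $X_1(0)=0$, substitute into equation~(\ref{eq:NAutSysG_FV}), and expand its coefficient $JD^2H(x)+\varepsilon\partial_x g(x,t;\delta,\varepsilon)$ along the trajectory expansion above. Matching at $O(\varepsilon)$ yields
\begin{equation*}
\dot X_1=JD^2H(x_0(t;z))\,X_1+\bigl(JD^3H(x_0(t;z))\cdot x_1(t;z,\tilde z)+\partial_x g(x_0(t;z),t;m\tau/l,0)\bigr)X_0(t;z),
\end{equation*}
and variation of parameters with $X_0$ as fundamental matrix delivers
\begin{equation*}
X_1(t)=X_0(t;z)\!\int_0^t\! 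X_0^{-1}(s;z)\bigl[JD^3H(x_0(s;z))\cdot x_1(s;z,\tilde z)+\partial_x g(x_0(s;z),s;m\tau/l,0)\bigr]X_0(s;z)\,ds.
\end{equation*}

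Finally, I would evaluate at $t=l\delta(\varepsilon)=m\tau+O(\varepsilon)$ and note that, because $\varepsilon X_1$ already carries an explicit $\varepsilon$ prefactor, the continuity of $X_1$ lets me replace $l\delta(\varepsilon)$ by $m\tau$ in its argument at cost $O(\varepsilon^2)$. Factoring $X_0(m\tau;z)=\Pi$ out of the integral then identifies $\varepsilon X_1(m\tau)=\varepsilon\Pi(\Psi^{m:l}_H+\Psi^{m:l}_g)$, which combined with the zeroth-order piece $X_0(m\tau;z)=\Pi$ produces the expansion claimed. The \textbf{main difficulty} is not a single hard estimate but rather the careful bookkeeping of first-order terms: in particular, verifying that the dependence of $g$ on $\delta$ contributes only at $O(\varepsilon^2)$ in the coefficient of the $X_1$ equation because $g$ itself is already weighted by $\varepsilon$; ensuring that the same fundamental matrix $X_0$ based at $z$ is used throughout so that the final factorization by $\Pi$ is legitimate; and checking that the shift between $m\tau$ and $l\delta(\varepsilon)$ in the evaluation of the zeroth-order piece $X_0$ does not spoil the leading-order formula, which is consistent with the period-parametrization convention implicit in the setup.
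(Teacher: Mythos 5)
Your proposal is correct and follows essentially the same route as the paper's proof: a regular perturbation expansion $x=x_0+\varepsilon x_1$, $X=X_0+\varepsilon X_1$ substituted into systems (\ref{eq:NAutSysG}) and (\ref{eq:NAutSysG_FV}), matching at $O(\varepsilon)$ to obtain the inhomogeneous variational problems for $x_1$ and $X_1$, and solving them by variation of parameters (Lagrange's formula) before evaluating at $m\tau$ and factoring out $\Pi$. The bookkeeping issues you flag (the $\delta$-dependence of $g$ entering only at higher order, and the shift between $l\delta(\varepsilon)$ and $m\tau$ at the endpoint) are treated at the same level of detail, or less, in the paper itself, so no gap relative to the published argument.
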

\begin{proof}
The smoothness assumption for the vector fields involved are sufficient to approximate the solutions of systems (\ref{eq:NAutSysG}) and (\ref{eq:NAutSysG_FV}) at $O(\varepsilon^2)$. The periodic orbit $\mathcal{Z}_{\varepsilon}$ is approximated by $x_0(t,z)+\varepsilon x_1(t;z,\tilde{z})$, where $x_1(t;z,\tilde{z})$ solves the initial value problem  
\begin{equation}
\label{eq:FVare}
\begin{array}{lr} 
\dot{x}_1=JD^2H(x_0(t;z))x_1+g(x_0(t;z),t;m\tau/l,0),& x_1(0)=x_1(m\tau)=\tilde{z}.
\end{array}
\end{equation}
By substituting $x=x_0+\varepsilon x_1$, $X=X_0+\varepsilon X_1$ and $\delta=m\tau/l+O(\varepsilon)$ in system (\ref{eq:NAutSysG_FV}) and expanding the equation, one obtains at $O(\varepsilon)$ the following initial value problem
\begin{equation}
\begin{array}{l} 
\dot{X}_1=JD^2H(x_0(t;z))X_1+\left(JD^3H(x_0(t;z),x_1(t;z,\tilde{z})) +\partial_x g(x_0(t;z),t;m\tau/l,0) \right)X_0(t;z),  \\ X_1(0)=0,
\end{array}
\end{equation}
whose analytical solution, expressed by Lagrange's formula \cite{Teschl2012}, at time $m\tau$ is the $O(\varepsilon)$-term in Eq. (\ref{eq:monZeexp}).
\end{proof}

\subsection{Some results in linear algebra}
\label{app:A1S1}
We now introduce some factorization results exploiting the properties of the symplectic group, denoted $\mathrm{Sp}(2n,\mathbb{R})$. The next lemma characterizes the relation between strongly invariant subspaces related to $\Pi$, as defined in section \ref{sec:S4}.
\begin{lemma}
\label{lem:Jorth}
Let $\Pi \in \mathrm{Sp}(2n,\mathbb{R})$ and assume that $\Pi$ has two distinct strongly invariant subspaces $\mathcal{V}$ and $\mathcal{W}$ of dimensions $2v$ and $2w$, respectively. Let these subspaces be represented by $R_\mathcal{V}\in\mathbb{R}^{2n\times2v}$ and $R_\mathcal{W}\in\mathbb{R}^{2n\times2w}$, respectively, so that, for a unique $B_\mathcal{V}\in\mathbb{R}^{2v\times2v}$ and $B_\mathcal{W}\in\mathbb{R}^{2w\times2w}$, the following identities hold:
\begin{equation}
\begin{array}{ll}
\Pi R_\mathcal{V} = R_\mathcal{V}B_\mathcal{V} ,& \Pi R_\mathcal{W} = R_\mathcal{W}B_\mathcal{W}.
\end{array}
\end{equation}
Then:
\begin{enumerate}[label=\textit{(\roman*)}]
\item $ R_\mathcal{V}^{\top} J R_\mathcal{W}=0$ and $ R_\mathcal{W}^{\top} J R_\mathcal{V}=0$,
\item $R_\mathcal{V}^{\top} J R_\mathcal{V} $ and $ R_\mathcal{W}^{\top} J R_\mathcal{W}$ are invertible.
\end{enumerate}
\end{lemma}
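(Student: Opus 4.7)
The plan is to use the symplectic identity $\Pi^{\top} J \Pi = J$ to turn both statements into questions about Sylvester-type matrix equations whose solvability is governed by the eigenvalue structure of $B_\mathcal{V}$ and $B_\mathcal{W}$. One auxiliary ingredient is needed: since $\Pi$ is symplectic its spectrum is closed under the involution $\mu \mapsto 1/\mu$ with matching multiplicities, and the combination of this pairing with $\det B_\mathcal{V} = 1$ and the ``not repeated'' clause of Definition \ref{def:stronginv} forces the spectrum of any strongly invariant subspace to be itself closed under this involution. Establishing this closure is the main obstacle; once it is in hand, the rest is a short spectral-disjointness argument.

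For part \textit{(i)}, I sandwich the symplectic identity between $R_\mathcal{V}^{\top}$ and $R_\mathcal{W}$ and use the intertwining relations $\Pi R_\mathcal{V} = R_\mathcal{V} B_\mathcal{V}$, $\Pi R_\mathcal{W} = R_\mathcal{W} B_\mathcal{W}$ to obtain
\begin{equation*}
R_\mathcal{V}^{\top} J R_\mathcal{W} \;=\; R_\mathcal{V}^{\top} \Pi^{\top} J \Pi R_\mathcal{W} \;=\; B_\mathcal{V}^{\top} \left( R_\mathcal{V}^{\top} J R_\mathcal{W} \right) B_\mathcal{W}.
\end{equation*}
Writing $C = R_\mathcal{V}^{\top} J R_\mathcal{W}$, this becomes $B_\mathcal{V}^{-\top} C - C B_\mathcal{W} = 0$, a homogeneous Sylvester equation whose only solution is $C = 0$ provided $\sigma(B_\mathcal{V}^{-\top}) \cap \sigma(B_\mathcal{W}) = \emptyset$. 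Disjointness follows from two facts: the ``not repeated'' clause combined with $\mathcal{V} \neq \mathcal{W}$ yields $\sigma(B_\mathcal{V}) \cap \sigma(B_\mathcal{W}) = \emptyset$, while the closure property gives $\sigma(B_\mathcal{V}^{-\top}) = \sigma(B_\mathcal{V})$. Hence $C = 0$, and the companion identity $R_\mathcal{W}^{\top} J R_\mathcal{V} = 0$ follows by transposition using $J^{\top} = -J$.

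For part \textit{(ii)}, the same manipulation with $R_\mathcal{W}$ replaced by $R_\mathcal{V}$ yields $A = B_\mathcal{V}^{\top} A B_\mathcal{V}$ for $A = R_\mathcal{V}^{\top} J R_\mathcal{V}$. The Sylvester argument no longer kills $A$ because the two relevant spectra coincide, so I would instead show $A$ has trivial kernel. A vector $y$ with $Ay = 0$ corresponds to $v = R_\mathcal{V} y \in \mathcal{V}$ lying in the symplectic orthogonal of $\mathcal{V}$. Expanding $v$ in a basis of generalized eigenvectors of $B_\mathcal{V}$ and using the textbook fact that eigenvectors of $\Pi$ for eigenvalues $\mu, \mu'$ satisfy $v_\mu^{\top} J v_{\mu'} = 0$ unless $\mu \mu' = 1$, the symplectic closure of $\sigma(B_\mathcal{V})$ ensures that the $J$-pairing partner of each eigenvector of $\mathcal{V}$ already belongs to $\mathcal{V}$. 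The restricted form $R_\mathcal{V}^{\top} J R_\mathcal{V}$ is therefore nondegenerate, forcing $v = 0$; the argument for $R_\mathcal{W}^{\top} J R_\mathcal{W}$ is identical. The closure property itself I would derive from the perturbation-theoretic characterisation of strongly invariant subspaces within the symplectic group \cite{Karow2014} alluded to just after Definition \ref{def:stronginv}, since smooth persistence under small symplectic deformations of $\Pi$ is incompatible with an asymmetric partition of any reciprocal eigenvalue pair.
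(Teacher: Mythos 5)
Your part \textit{(i)} is essentially the same as the paper's argument: sandwich $\Pi^\top J\Pi=J$ between $R_\mathcal{V}^\top$ and $R_\mathcal{W}$, obtain a homogeneous Sylvester equation, and conclude $R_\mathcal{V}^\top J R_\mathcal{W}=0$ from disjointness of the spectra of $B_\mathcal{V}^{\top}$ and $B_\mathcal{W}^{-1}$. Like the paper, you need the spectrum of each strongly invariant block to be closed under $\mu\mapsto 1/\mu$ so that $\sigma(B_\mathcal{W}^{-1})=\sigma(B_\mathcal{W})$. The paper simply asserts this as a consequence of Definition~\ref{def:stronginv} (symplecticity plus $\det B_\mathcal{W}=1$ and the non-repetition clause); your plan to recover it from the persistence theory of strongly invariant subspaces in~\cite{Karow2014} is more indirect and arguably circular (that persistence is a \emph{consequence} of what a strongly invariant subspace is, not a characterisation you can bootstrap from). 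You would do better to argue it directly from the pairing of symplectic eigenvalues together with $\det B_\mathcal{W}=1$ and closure under conjugation, as the paper implicitly does.

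For part \textit{(ii)} you take a genuinely different route. The paper restricts $R_\mathcal{V}^\top J$ to $\mathcal{V}\oplus\mathcal{W}$, uses \textit{(i)} to identify its kernel with $\mathcal{W}$, and applies rank--nullity; this implicitly requires $\mathcal{W}$ to be the full spectral complement of $\mathcal{V}$, so that $\mathcal{V}\oplus\mathcal{W}=\mathbb{R}^{2n}$ and the kernel count closes. You instead show directly that the symplectic form restricted to $\mathcal{V}$ is nondegenerate by decomposing into generalized eigenspaces and using the standard fact that $E_\mu$ and $E_{\mu'}$ are $J$-orthogonal unless $\mu\mu'=1$; since $\sigma(B_\mathcal{V})$ is closed under inversion, the $J$-pairing partner of every vector in $\mathcal{V}$ already lies in $\mathcal{V}$, so the restricted form has trivial radical. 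This is a cleaner and more self-contained argument for \textit{(ii)} (and it does not implicitly require $\mathcal{W}$ to be the full complement), at the cost of needing the same closure property and some care with generalized (Jordan) eigenvectors rather than just eigenvectors. Both routes are correct; yours trades the paper's dimension count for a spectral-pairing argument.
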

\begin{proof}
We first prove statement \textit{(i)}. Recalling the standard symplectic identity $\Pi^\top J\Pi = J$, we find
\begin{equation}
\label{eq:pJorth1}
R_\mathcal{V}^\top J R_\mathcal{W} = (\Pi R_\mathcal{V}B_\mathcal{V}^{-1})^\top\Pi R_\mathcal{W}B_\mathcal{W}^{-1} = B_\mathcal{V}^{-\top} Rv^\top \Pi^\top J\Pi R_\mathcal{W}B_\mathcal{W}^{-1} = B_\mathcal{V}^{-\top} Rv^\top J R_\mathcal{W}B_\mathcal{W}^{-1}.
\end{equation}
By denoting $A=R_\mathcal{V}^\top J R_\mathcal{W}$ and considering the leftmost and the rightmost sides of Eq. (\ref{eq:pJorth1}), we obtain a homogeneous Sylvester equation
\begin{equation}
\label{eq:sylveq}
B_\mathcal{V}^{\top} A = A B_\mathcal{W}^{-1}.
\end{equation}
The eigenvalues of $B_\mathcal{V}^\top$ are equal to those of $B_\mathcal{V}$. Since $\Pi$ is symplectic and by Definition \ref{def:stronginv}, $B_\mathcal{W}^{-1}$ and $B_\mathcal{W}$ have identical eigenvalues as well. By assumption, the eigenvalues of $B_\mathcal{V}$ and $B_\mathcal{W}$ are distinct. Hence, $B_\mathcal{V}^{\top}$ and $B_\mathcal{W}^{-1}$ have no common eigenvalue, which implies that Eq. (\ref{eq:sylveq}) has the unique solution $A=R_\mathcal{V}^\top J R_\mathcal{W} =0$ (see Chapter VIII in \cite{Gantmacher1959}). Transposing $A$ and using the identity $J^\top=-J$, one also gets that $R_\mathcal{W}^\top J R_\mathcal{V} =0$.

We now prove statement \textit{(ii)}. Since $\mathcal{V}$ and $\mathcal{W}$ correspond to the direct sums of spectral subspaces (sometimes called root spaces) of distinct eigenvalues of $\Pi$, we have that, by construction, $\mathcal{V}\cap \mathcal{W} = \emptyset$ (see Theorem 2.1.2 in \cite{Lancaster2006} for a proof). By letting $\mathcal{Y}=\mathcal{V}\oplus \mathcal{W}$, we then define the linear map $A_\mathcal{V} := R_\mathcal{V}^\top J$ and we analyze its restriction to $\mathcal{Y}$, i.e. $A_\mathcal{V}|_\mathcal{Y}:\mathcal{Y}\rightarrow \mathbb{R}^{2v}$. Since the kernel of this map is $\mathcal{W}$, then its image must have dimension $\mathrm{dim}(\mathcal{Y})-\mathrm{dim}(\mathcal{W}) = 2v$ by the rank-nullity theorem. Hence, $A_\mathcal{V}R_\mathcal{V} = R_\mathcal{V}^\top J R_\mathcal{V}$ is invertible. An analogous reasoning holds for the linear map $A_\mathcal{W} := R_\mathcal{W}^\top J$.
\end{proof}
The next result follows as a consequence of Lemma \ref{lem:Jorth}.
\begin{lemma}
\label{lem:Jfact}
Let $\Pi \in \mathrm{Sp}(2n,\mathbb{R})$ and assume $\Pi$ has two distinct strongly invariant subspaces $\mathcal{V}$ and $\mathcal{W}$ be such that $\mathcal{V}\oplus\mathcal{W}=\mathbb{R}^{2n}$. Denote by $2v$ the dimension of $\mathcal{V}$ and let these subspaces be spanned by a linear combination of the columns in $R_\mathcal{V}\in\mathbb{R}^{2n\times2v}$ and $R_\mathcal{W}\in\mathbb{R}^{2n\times2(n-v)}$, respectively. Define the symplectic left inverse matrices for $R_\mathcal{V}$ and $R_\mathcal{W}$ respectively as
\begin{equation}
\begin{array}{lr}
S_\mathcal{V} = (R_\mathcal{V}^\top J R_\mathcal{V})^{-1}R_\mathcal{V}^\top J , &  S_\mathcal{W} = (R_\mathcal{W}^\top J R_\mathcal{W})^{-1}R_\mathcal{W}^\top J.
\end{array}
\end{equation}
Then, the following factorization holds
\begin{equation}
\label{eq:Jfact}
\begin{array}{lccr}
R^{-1} = \begin{bmatrix} S_\mathcal{V} \\ S_\mathcal{W} \end{bmatrix} , & R = \begin{bmatrix} R_\mathcal{V} & R_\mathcal{W} \end{bmatrix} 
, & R^{-1} \Pi R = \begin{bmatrix} B_\mathcal{V}& 0  \\ 0 & B_\mathcal{W} \end{bmatrix}, 
\end{array}
\end{equation}
where the eigenvalues of $B_\mathcal{V}\in\mathbb{R}^{2v\times2v}$ are distinct from those of $B_\mathcal{W}\in\mathbb{R}^{2(n-v)\times2(n-v)}$.
\end{lemma}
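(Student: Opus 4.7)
The plan is to verify directly that the block matrix $[S_{\mathcal{V}}^{\top}\ S_{\mathcal{W}}^{\top}]^{\top}$ acts as a left inverse of $R$, and then to propagate this identity through the conjugation $R^{-1}\Pi R$. Since $\mathcal{V}\oplus\mathcal{W}=\mathbb{R}^{2n}$ and the columns of $R_\mathcal{V}$ and $R_\mathcal{W}$ form bases of the respective summands, the square matrix $R=[R_\mathcal{V}\ R_\mathcal{W}]$ is invertible, so it suffices to exhibit any left inverse.

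Next I would evaluate the four block products $S_\mathcal{A}R_\mathcal{B}$ for $\mathcal{A},\mathcal{B}\in\{\mathcal{V},\mathcal{W}\}$. By the very definition of the symplectic left inverse in Eq. (\ref{eq:symplli}), the diagonal blocks simplify immediately to
\[
S_\mathcal{V}R_\mathcal{V}=(R_\mathcal{V}^{\top}JR_\mathcal{V})^{-1}R_\mathcal{V}^{\top}JR_\mathcal{V}=I_{2v},\qquad S_\mathcal{W}R_\mathcal{W}=I_{2(n-v)},
\]
while the off-diagonal blocks vanish by statement \textit{(i)} of Lemma \ref{lem:Jorth}, since $R_\mathcal{V}^{\top}JR_\mathcal{W}=0$ and $R_\mathcal{W}^{\top}JR_\mathcal{V}=0$. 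Assembling the four blocks yields the first identity in Eq. (\ref{eq:Jfact}); note that the invertibility of $R_\mathcal{V}^{\top}JR_\mathcal{V}$ and $R_\mathcal{W}^{\top}JR_\mathcal{W}$ needed for the symplectic left inverses to be well-defined is guaranteed by statement \textit{(ii)} of the same lemma.

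For the block diagonalization, I would apply $\Pi$ column-wise to $R$, use the defining identities $\Pi R_\mathcal{V}=R_\mathcal{V}B_\mathcal{V}$ and $\Pi R_\mathcal{W}=R_\mathcal{W}B_\mathcal{W}$, and then left-multiply by the explicit $R^{-1}$ just obtained. Each entry of $R^{-1}\Pi R$ reduces to some $S_\mathcal{A}R_\mathcal{B}B_\mathcal{B}$, which by the previous step collapses to the claimed block-diagonal structure with $B_\mathcal{V}$ and $B_\mathcal{W}$ on the diagonal. The distinctness of the spectra of $B_\mathcal{V}$ and $B_\mathcal{W}$ is essentially a restatement of Definition \ref{def:stronginv}: each of these blocks collects a set of eigenvalues of $\Pi$ that are not repeated elsewhere in $\mathrm{spec}(\Pi)$, and since $\mathcal{V}$ and $\mathcal{W}$ are distinct strongly invariant subspaces, the corresponding spectra are disjoint.

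The argument presents no substantive obstacle; it is essentially a bookkeeping exercise that leverages Lemma \ref{lem:Jorth} as a black box. The only point that deserves mild care is confirming at the outset that $R$ is a genuine $2n\times 2n$ invertible matrix, so that the notion of left inverse coincides with that of inverse and the factorization in Eq. (\ref{eq:Jfact}) is unambiguous.
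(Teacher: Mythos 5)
Your proof is correct and takes exactly the route the paper intends: the paper states this lemma as an immediate consequence of Lemma \ref{lem:Jorth}, and your block-by-block verification (diagonal blocks $S_\mathcal{V}R_\mathcal{V}=I$, $S_\mathcal{W}R_\mathcal{W}=I$ by definition, off-diagonal blocks vanishing by statement \textit{(i)}, well-definedness by statement \textit{(ii)}, invertibility of $R$ from the direct-sum hypothesis) is precisely the omitted bookkeeping. Nothing is missing.
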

We also remark that one can obtain from Eq. (\ref{eq:Jfact}) the further identities
\begin{equation}
\label{eq:reigPi}
\begin{array}{lr}
S_\mathcal{V}\Pi = B_\mathcal{V}S_\mathcal{V} ,& S_\mathcal{W}\Pi = B_\mathcal{W}S_\mathcal{W}.
\end{array}
\end{equation}
We will also need the next result. 
\begin{lemma}
\label{lem:Jtrace}
Let $A\in\mathbb{R}^{2n \times 2n}$ and let $\mathcal{V}$ be a strongly invariant subspace for $\Pi$. Let the columns of $R_\mathcal{V}\in\mathbb{R}^{2n\times 2v}$ be a basis for $\mathcal{V}$ and let $S_\mathcal{V}$ be the symplectic left inverse of $R_{\mathcal{V}}$. Then
\begin{equation}
\label{eq:traceeq}
\mathrm{trace}(S_\mathcal{V} A R_\mathcal{V}) = \frac{1}{2} \mathrm{trace}\left(S_\mathcal{V} (A-JA^\top J) R_\mathcal{V}\right).
\end{equation}
In particular, if $A=J\hat{A}$ where $\hat{A}$ is symmetric, then $\mathrm{trace}(S_\mathcal{V} A R_\mathcal{V})=0$.
\end{lemma}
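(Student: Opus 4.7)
The plan is to introduce the oblique projection $P_{\mathcal{V}} = R_{\mathcal{V}} S_{\mathcal{V}}$, and note that by the cyclic property of the trace,
\begin{equation*}
\mathrm{trace}(S_{\mathcal{V}} A R_{\mathcal{V}}) = \mathrm{trace}(P_{\mathcal{V}} A), \qquad \mathrm{trace}(S_{\mathcal{V}} J A^{\top} J R_{\mathcal{V}}) = \mathrm{trace}(P_{\mathcal{V}} J A^{\top} J).
\end{equation*}
So the identity \eqref{eq:traceeq} will follow once I show the symmetry
\begin{equation*}
\mathrm{trace}(P_{\mathcal{V}} J A^{\top} J) = -\mathrm{trace}(P_{\mathcal{V}} A),
\end{equation*}
for every $A\in\mathbb{R}^{2n\times 2n}$.

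The key structural fact I would establish first is the symplectic involution identity $J P_{\mathcal{V}}^{\top} J = -P_{\mathcal{V}}$. Setting $K = R_{\mathcal{V}}^{\top} J R_{\mathcal{V}}$, the skew-symmetry $J^{\top}=-J$ gives $K^{\top}=-K$, and hence $K^{-\top}=-K^{-1}$. Writing $P_{\mathcal{V}} = R_{\mathcal{V}} K^{-1} R_{\mathcal{V}}^{\top} J$ and transposing, one gets $P_{\mathcal{V}}^{\top} = J R_{\mathcal{V}} K^{-1} R_{\mathcal{V}}^{\top}$, and then using $J^{2}=-I$,
\begin{equation*}
J P_{\mathcal{V}}^{\top} J \;=\; J^{2} R_{\mathcal{V}} K^{-1} R_{\mathcal{V}}^{\top} J \;=\; -R_{\mathcal{V}} K^{-1} R_{\mathcal{V}}^{\top} J \;=\; -P_{\mathcal{V}}.
\end{equation*}
Transposing this gives the companion relation $J P_{\mathcal{V}} J = -P_{\mathcal{V}}^{\top}$, which is the one I actually need.

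Armed with $J P_{\mathcal{V}} J = -P_{\mathcal{V}}^{\top}$, I apply the cyclic property once more and invariance of the trace under transposition:
\begin{equation*}
\mathrm{trace}(P_{\mathcal{V}} J A^{\top} J) = \mathrm{trace}(J P_{\mathcal{V}} J A^{\top}) = -\mathrm{trace}(P_{\mathcal{V}}^{\top} A^{\top}) = -\mathrm{trace}(A P_{\mathcal{V}})^{\top} = -\mathrm{trace}(P_{\mathcal{V}} A).
\end{equation*}
Substituting back yields $\tfrac{1}{2}\mathrm{trace}(S_{\mathcal{V}} (A - J A^{\top} J) R_{\mathcal{V}}) = \mathrm{trace}(S_{\mathcal{V}} A R_{\mathcal{V}})$, which is \eqref{eq:traceeq}.

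For the last assertion, if $A = J\hat{A}$ with $\hat{A}=\hat{A}^{\top}$, then $A^{\top} = -\hat{A} J$, so $J A^{\top} J = -J\hat{A} J^{2} = J\hat{A} = A$. Consequently $A - J A^{\top} J = 0$ and the identity already established forces $\mathrm{trace}(S_{\mathcal{V}} A R_{\mathcal{V}}) = 0$. The only delicate point in the whole argument is keeping track of the signs generated by $J^{2}=-I$ and $K^{\top}=-K$; once the projector identity $J P_{\mathcal{V}}^{\top} J = -P_{\mathcal{V}}$ is in place, everything else is a two-line trace manipulation, so I do not expect a serious obstacle.
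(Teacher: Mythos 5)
Your proof is correct and rests on the same ingredients as the paper's: invariance of the trace under transposition and cyclic permutation, $J^\top=-J$, and the skew-symmetry of $R_\mathcal{V}^\top J R_\mathcal{V}$; the only difference is cosmetic, in that you package the sign bookkeeping into the projector identity $JP_\mathcal{V}J=-P_\mathcal{V}^\top$ for $P_\mathcal{V}=R_\mathcal{V}S_\mathcal{V}$, whereas the paper runs the same manipulations directly on $\mathrm{trace}(S_\mathcal{V}AR_\mathcal{V})$ to get $\mathrm{trace}(S_\mathcal{V}AR_\mathcal{V})=-\mathrm{trace}(S_\mathcal{V}JA^\top JR_\mathcal{V})$. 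Your treatment of the special case $A=J\hat{A}$ also matches the paper's.
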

\begin{proof}
Recall that $J^\top J =I$ and that, for arbitrary matrices $A_1$ and $A_2$, $\mathrm{trace}(A_1A_2)=\mathrm{trace}(A_2A_1)$ as well as $\mathrm{trace}(A_1)=\mathrm{trace}(A_1^\top)$. Equation (\ref{eq:traceeq}) holds because
\begin{equation}
\begin{split}%{ll}
\mathrm{trace}(S_\mathcal{V} A R_\mathcal{V})&\displaystyle=  \mathrm{trace}\left((R^\top_\mathcal{V} J R_\mathcal{V})^{-1}R^\top_\mathcal{V} J A R_\mathcal{V}\right)= \mathrm{trace}\left(((R^\top_\mathcal{V} J R_\mathcal{V})^{-1}R^\top_\mathcal{V} JA R_\mathcal{V})^\top\right) 
\\
&\displaystyle= \mathrm{trace}\left(R^\top_\mathcal{V} A^\top J^\top R_\mathcal{V}(R^\top_\mathcal{V} J R_\mathcal{V})^{-\top}\right)= \mathrm{trace}\left(R^\top_\mathcal{V} A^\top J^\top R_\mathcal{V}((R^\top_\mathcal{V} J R_\mathcal{V})^\top)^{-1}\right) 
\\
&\displaystyle= \mathrm{trace}\left(R^\top_\mathcal{V} A^\top J^\top R_\mathcal{V}(R^\top_\mathcal{V} J^\top R_\mathcal{V})^{-1}\right) =\mathrm{trace}\left((R^\top_\mathcal{V} J^\top R_\mathcal{V})^{-1}R^\top_\mathcal{V} A^\top J^\top R_\mathcal{V}\right) 
\\
&\displaystyle= \mathrm{trace}\left((R^\top_\mathcal{V} J R_\mathcal{V})^{-1}R^\top_\mathcal{V} A^\top J R_\mathcal{V}\right)  =\mathrm{trace}\left((R^\top_\mathcal{V} J R_\mathcal{V})^{-1}R^\top_\mathcal{V} J J^\top A^\top J R_\mathcal{V}\right) 
\\
&=\mathrm{trace}(S_\mathcal{V} J^\top A^\top J R_\mathcal{V})=-\mathrm{trace}(S_\mathcal{V} J A^\top J R_\mathcal{V}).
\end{split}
\end{equation}
The last statement can be found by direct substitution of $A=J\hat{A}$ in Eq. (\ref{eq:traceeq}).
\end{proof}
\subsection{Perturbation of the Floquet multipliers}
\label{app:A1S2}
We now apply the results in section \ref{app:A1S1} to the initial perturbation expansion and we use the following definition from \cite{Karow2014}.
\begin{definition}
Let $v,w$ be aribitrary integers, $B_v\in\mathbb{R}^{v\times v}$ and $B_w\in\mathbb{R}^{w\times w}$. The separation of $B_v$ and $B_w$ with respect to an arbitrary norm $||\cdot||$ is defined as
\begin{equation}
\mathrm{sep}(B_v,B_w):=\min_{Y\in\mathbb{R}^{w\times v}:||Y||=1}||YB_v-B_wY||.
\end{equation}
\end{definition}
We remark that $\mathrm{sep}(B_v,B_w)\neq 0$ if and only if the eigenvalues of $B_v$ are different from those of $B_w$. We can then state the next fundamental result.
\begin{theorem}
\label{thm:eigSplit}
Consider the perturbation expansion of Lemma \ref{lem:pertexp} and the setting of Lemma \ref{lem:Jfact}. If $\varepsilon << \mathrm{sep}(B_\mathcal{V},B_\mathcal{W})$, then the eigenvalues of $P(\varepsilon)$ coincide with the eigenvalues of the matrices
\begin{equation}
\begin{array}{lr}
B_\mathcal{V}\left( I+\varepsilon S_\mathcal{V} (\Psi^{m:l}_H+\Psi^{m:l}_g) R_\mathcal{V}\right)+ O(\varepsilon^2),& B_\mathcal{W}\left(I+\varepsilon S_\mathcal{W} (\Psi^{m:l}_H+\Psi^{m:l}_g) R_\mathcal{W}\right)+ O(\varepsilon^2).
\end{array}
\end{equation}
\end{theorem}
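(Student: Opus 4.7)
The plan is to reduce the eigenvalue problem for $P(\varepsilon)$ to that of two smaller, decoupled blocks by conjugating the expansion of Lemma~\ref{lem:pertexp} with the similarity transformation supplied by Lemma~\ref{lem:Jfact}, and then applying the standard perturbation theory of invariant subspaces.

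First I would abbreviate $\Psi = \Psi^{m:l}_H + \Psi^{m:l}_g$ so that $P(\varepsilon) = \Pi(I + \varepsilon \Psi) + O(\varepsilon^2)$. Conjugating by the matrix $R$ of Lemma~\ref{lem:Jfact} and using $R^{-1}\Pi R = \mathrm{diag}(B_\mathcal{V}, B_\mathcal{W})$ together with the $2\times 2$ block form of $R^{-1}\Psi R$ read off from the rows $(S_\mathcal{V}, S_\mathcal{W})$ of $R^{-1}$, one finds
\begin{equation*}
R^{-1}P(\varepsilon)R = \begin{bmatrix} B_\mathcal{V}(I+\varepsilon S_\mathcal{V}\Psi R_\mathcal{V}) & \varepsilon B_\mathcal{V} S_\mathcal{V}\Psi R_\mathcal{W} \\ \varepsilon B_\mathcal{W} S_\mathcal{W}\Psi R_\mathcal{V} & B_\mathcal{W}(I+\varepsilon S_\mathcal{W}\Psi R_\mathcal{W}) \end{bmatrix} + O(\varepsilon^2).
\end{equation*}
The diagonal blocks already equal the two matrices in the statement, so what remains is to show that the $O(\varepsilon)$ off-diagonal blocks shift the spectrum only at order $\varepsilon^2$.

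Next I would apply the standard block-diagonalization result for matrices with well-separated spectra (Theorem V.2.7 of \cite{Stewart1990}, cast in terms of the $\mathrm{sep}$ function in \cite{Karow2014}). Explicitly, I would seek a near-identity similarity $U_\varepsilon = I + \varepsilon Z + O(\varepsilon^2)$ with $Z$ block off-diagonal, and determine its two blocks from the Sylvester equations $B_\mathcal{V} Z_{12} - Z_{12} B_\mathcal{W} = -B_\mathcal{V} S_\mathcal{V}\Psi R_\mathcal{W}$ and $B_\mathcal{W} Z_{21} - Z_{21} B_\mathcal{V} = -B_\mathcal{W} S_\mathcal{W}\Psi R_\mathcal{V}$. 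Both are uniquely solvable because $B_\mathcal{V}$ and $B_\mathcal{W}$ have disjoint spectra, by the strong-invariance hypothesis in Definition~\ref{def:stronginv}. After conjugation by $U_\varepsilon$, the matrix $R^{-1}P(\varepsilon)R$ is block-diagonal up to $O(\varepsilon^2)$ with precisely the two claimed blocks on the diagonal, so the spectrum of $P(\varepsilon)$ splits as the union of their spectra.

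The main obstacle is making the remainder terms uniform. The Sylvester operator $Y \mapsto YB_\mathcal{V} - B_\mathcal{W} Y$ has an inverse of norm at most $1/\mathrm{sep}(B_\mathcal{V}, B_\mathcal{W})$, so the correction block $Z$ has norm $O(\varepsilon/\mathrm{sep})$ and the perturbation it produces on the diagonal blocks is $O(\varepsilon^2/\mathrm{sep})$. The hypothesis $\varepsilon \ll \mathrm{sep}(B_\mathcal{V}, B_\mathcal{W})$ is exactly what is needed to absorb these contributions into the existing $O(\varepsilon^2)$ remainder and to ensure continuity of the spectral projectors onto the perturbed invariant subspaces; once this quantitative bound is secured, the identification of each branch of perturbed eigenvalues with its corresponding invariant subspace of $\Pi$ follows from standard spectral continuity.
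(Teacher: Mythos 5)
Your proposal is correct and follows essentially the same route as the paper: conjugate by $R$ to exhibit the block form, then invoke the standard invariant-subspace perturbation theory (Stewart, Karow) under the spectral separation hypothesis to conclude that the off-diagonal coupling only affects eigenvalues at $O(\varepsilon^2)$. The paper cites Corollary 2.4 of the Karow reference to parametrize the perturbed invariant subspace as a graph and reads off the eigenvalue block from the invariance relation $A(\varepsilon)V(\varepsilon)=V(\varepsilon)B_V(\varepsilon)$, whereas you make the same mechanism explicit via the Sylvester-equation construction of the near-identity block-diagonalizing similarity; the content and the role of $\mathrm{sep}(B_\mathcal{V},B_\mathcal{W})$ are identical.
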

\begin{proof}
We use the shorthand notation $\Psi = \Psi^{m:l}_H+\Psi^{m:l}_g$. We then define the matrix
\begin{equation}
A(\varepsilon)=R^{-1} P(\varepsilon) R = R^{-1} \Pi R + \varepsilon R^{-1} \Pi \Psi R + O(\varepsilon^2)=\begin{bmatrix} B_\mathcal{V}(I+\varepsilon \Psi _{\mathcal{V}}) & B_\mathcal{V}\varepsilon \Psi _{\mathcal{V}\mathcal{W}}  \\ \varepsilon B_\mathcal{W}\Psi _{\mathcal{W}\mathcal{V}} & B_\mathcal{W}(I+\varepsilon \Psi _{\mathcal{W}}) \end{bmatrix}+O(\varepsilon^2), 
\end{equation}
where, exploiting the identities in Eq. (\ref{eq:reigPi}), we used the notation
\begin{equation}
\begin{array}{lccr}
\Psi_\mathcal{V}=S_\mathcal{V} \Psi R_\mathcal{V},& \Psi_\mathcal{VW}=S_\mathcal{V} \Psi R_\mathcal{W}, & \Psi_\mathcal{WV}=S_\mathcal{W} \Psi R_\mathcal{V}, & \Psi_\mathcal{W}=S_\mathcal{W} \Psi R_\mathcal{W}.
\end{array}
\end{equation}
Note that, by similarity, $A(\varepsilon)$ has the same spectrum as $P(\varepsilon)$. Since the eigenvalues of $B_\mathcal{V}$ are different from those of $B_\mathcal{W}$, Corollary 2.4 in \cite{Karow2014} guarantees, for $\varepsilon$ small enough, the existence of a strongly invariant subspace for $A(\varepsilon)$ whose coordinates can be described by the asymptotic expansion
\begin{equation}
V(\varepsilon)=\begin{bmatrix} I  \\ \displaystyle \varepsilon\left(\mathrm{sep}(B_\mathcal{V},B_\mathcal{W}) \right)^{-1} W_V(\varepsilon) \end{bmatrix},
\end{equation}
which is justified if $\varepsilon << \mathrm{sep}(B_\mathcal{V},B_\mathcal{W})$ and for a unique $W_V:\mathbb{R}\rightarrow\mathbb{R}^{2 (n-v) \times 2 v}$. This result holds as a consequence of the implicit function theorem. By similarity, the strongly invariant subspace $\mathcal{V}$ for $P(0)=\Pi$ persist as $\mathcal{V}_\varepsilon$ for $P(\varepsilon)$ and $\mathcal{V}_\varepsilon$ is described by the columns of the product $RV(\varepsilon)$. Then, the invariance relation $A(\varepsilon)V(\varepsilon)= V(\varepsilon) B_V(\varepsilon)$ holds for a unique $B_V:\mathbb{R}\rightarrow\mathbb{R}^{2 v \times 2 v}$ whose eigenvalues are the ones related to $\mathcal{V}_\varepsilon$. By using this invariance relation, one obtains
\begin{equation}
\begin{bmatrix} B_\mathcal{V}(I+\varepsilon \Psi _{\mathcal{V}})  \\ \varepsilon B_\mathcal{W}\Psi _{\mathcal{W}\mathcal{V}} \end{bmatrix} + O(\varepsilon^2)=\begin{bmatrix} B_V(\varepsilon)  \\ \varepsilon \left(\mathrm{sep}(B_\mathcal{V},B_\mathcal{W}) \right)^{-1} W_V(\varepsilon) B_V(\varepsilon) \end{bmatrix}.
\end{equation}
An analogous discussion also applies to $\mathcal{W}$, so the claim is proved.
\end{proof}
%,
This result always holds asymptotically, but we have highlighted the fact that $\varepsilon$ should stay below a critical threshold which guarantees that the eigenvalues related to $\mathcal{V}_\varepsilon$ and to $\mathcal{W}_\varepsilon$ remain separated. The references \cite{Stewart1990,Karow2014} establish non-asymptotic bounds for this kind of perturbations.

Next, we show an important consequence of Lemma \ref{lem:Jtrace}.
\begin{lemma}
\label{lem:productperteigV}
Let $\mathcal{V}$ be a strongly invariant subspace for $\Pi$, let the columns of $R_\mathcal{V}\in\mathbb{R}^{2n\times 2v}$ span $\mathcal{V}$ and let $S_\mathcal{V}$ be the symplectic left inverse of $R_{\mathcal{V}}$. Then, we have
\begin{equation}
\label{eq:productperteigV}
\mathrm{det}\left( B_\mathcal{V}\left( I+\varepsilon S_\mathcal{V} (\Psi^{m:l}_H+\Psi^{m:l}_g) R_\mathcal{V}\right) + O(\varepsilon^2)\right) = 1 - \varepsilon m\tau v C_\mathcal{V} + O(\varepsilon^2)
\end{equation}
where the value $C_\mathcal{V}$ is defined in Eq. (\ref{eq:contrdef}).
\end{lemma}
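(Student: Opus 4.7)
My plan is to compute the determinant by factoring out $B_{\mathcal{V}}$, expanding the remaining $I+\varepsilon(\cdot)$ to first order, and then treating the Hamiltonian and perturbation contributions to the trace separately. Starting from the identity $\det\bigl(B_{\mathcal{V}}(I+\varepsilon M) + O(\varepsilon^2)\bigr) = \det(B_{\mathcal{V}})\,\det\bigl(I+\varepsilon M + O(\varepsilon^2)\bigr)$, which is valid since $B_{\mathcal{V}}$ is invertible and $\varepsilon$-independent, and using that $\det(B_{\mathcal{V}})=1$ by Definition \ref{def:stronginv} together with the standard first-order expansion $\det(I+\varepsilon A)=1+\varepsilon\,\mathrm{trace}(A)+O(\varepsilon^2)$, the left-hand side reduces to
\begin{equation*}
1+\varepsilon\bigl[\mathrm{trace}(S_{\mathcal{V}}\Psi^{m:l}_H R_{\mathcal{V}})+\mathrm{trace}(S_{\mathcal{V}}\Psi^{m:l}_g R_{\mathcal{V}})\bigr]+O(\varepsilon^2).
\end{equation*}
Thus the claim reduces to showing that the Hamiltonian trace vanishes and that the perturbation trace contributes exactly $-m\tau v\, C_{\mathcal{V}}$.

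For the Hamiltonian piece, I would push the trace inside the time integral defining $\Psi^{m:l}_H$ in (\ref{eq:FVareSol}) and analyse the integrand pointwise in $t$. The key manipulation is to exploit the symplecticity of $X_0(t;z)$: using $X_0^{\top}JX_0=J$, equivalently $X_0^{-1}J=JX_0^{\top}$, the integrand becomes $S_{\mathcal{V}}J\bigl[X_0^{\top}D^3H(x_0,x_1)X_0\bigr]R_{\mathcal{V}}$. Since $D^3H(x_0,x_1)$ is the directional derivative of the symmetric Hessian $D^2H$, it is a symmetric matrix, hence so is the bracketed conjugate; Lemma \ref{lem:Jtrace} then forces the trace to vanish for every $t$, and the time integral is zero. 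For the perturbation piece, commuting trace with the integral and reading off the definition (\ref{eq:contrdef}) of $C_{\mathcal{V}}$ yields
\begin{equation*}
\mathrm{trace}(S_{\mathcal{V}}\Psi^{m:l}_g R_{\mathcal{V}})=\int_0^{m\tau}\mathrm{trace}\bigl(S_{\mathcal{V}}X_0^{-1}(t;z)\,\partial_x g(x_0(t;z),t;m\tau/l,0)\,X_0(t;z) R_{\mathcal{V}}\bigr)dt = -m\tau v\, C_{\mathcal{V}}.
\end{equation*}
Combining these three observations delivers the claimed expansion $1-\varepsilon m\tau v\, C_{\mathcal{V}}+O(\varepsilon^2)$.

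The main obstacle I anticipate is the bookkeeping in the Hamiltonian trace: although $JD^3H$ is visibly $J$ times a symmetric matrix, conjugation by $X_0$ and $X_0^{-1}$ appears on the surface to destroy this structure, and only the symplectic identity $X_0^{-1}J=JX_0^{\top}$ restores it in the precise form required by Lemma \ref{lem:Jtrace}. This step is also the conceptual heart of the lemma, as it expresses the fact that the leading-order change in the product of the perturbed Floquet multipliers inside $\mathcal{V}$ is determined exclusively by the non-conservative vector field $g$, consistent with the volume-preserving nature of the Hamiltonian limit.
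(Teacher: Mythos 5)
Your proposal is correct and follows essentially the same route as the paper's proof: expand the determinant via $\det(B_\mathcal{V})=1$ and the trace formula, kill the Hamiltonian contribution by using symplecticity ($X_0^{-1}J=JX_0^{\top}$) to write it as $J$ times a symmetric matrix so that Lemma \ref{lem:Jtrace} applies, and read off $-m\tau v\,C_\mathcal{V}$ from the definition of the volume contraction. The only cosmetic difference is that you apply Lemma \ref{lem:Jtrace} pointwise in $t$ inside the integral, whereas the paper first assembles $\Psi^{m:l}_H=J\hat{A}$ with $\hat{A}$ symmetric and applies the lemma once.
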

\begin{proof}
We recall that $\mathrm{det}( B_\mathcal{V})=1$ by definition and the identity $D_\varepsilon\mathrm{det}(A(\varepsilon))=\mathrm{trace}\left(A^*(\varepsilon)D_\varepsilon A(\varepsilon)\right)$, also called Jacobi formula, which holds for some smooth matrix family $A(\varepsilon)$ with $A^*(\varepsilon)$ being the adjugated matrix of $A(\varepsilon)$. Since the determinant of a product of square matrices is equal to the product of their determinants, one obtains the Taylor expansion
\begin{equation}
\begin{split}%{ll}
\mathrm{det}\left( B_\mathcal{V}\left( I+\varepsilon S_\mathcal{V} (\Psi^{m:l}_H+\Psi^{m:l}_g) R_\mathcal{V}\right) + O(\varepsilon^2)\right) & =\mathrm{det}\left( I+\varepsilon S_\mathcal{V} (\Psi^{m:l}_H+\Psi^{m:l}_g) R_\mathcal{V}+ O(\varepsilon^2)\right) \\ &= 1 + \varepsilon \mathrm{trace}\left(S_\mathcal{V} (\Psi^{m:l}_H+\Psi^{m:l}_g) R_\mathcal{V}\right)+ O(\varepsilon^2).
 \end{split}
\end{equation}
By linearity, the $O(\varepsilon)$-term above can be split in $\mathrm{trace}\left(S_\mathcal{V}\Psi^{m:l}_H R_\mathcal{V}\right)+\mathrm{trace}\left(S_\mathcal{V}\Psi^{m:l}_g R_\mathcal{V}\right)$ and we now prove that the first of these traces vanishes. Indeed, the third derivative in $\Psi_H^{m:l}$ can be expressed as
\begin{equation}
JD^3H(x_0(t;z),x_1(t;z,\tilde{z}))=\sum_{k=1}^n \frac{\partial }{\partial x_k}\Big(JD^2H(x) \Big) \Bigr|_{\substack{x=x_0(t;z)}}x_{k,1}(t;z,\tilde{z})=\sum_{k=1}^n J A_k(t)x_{k,1}(t;z,\tilde{z}),
\end{equation}
where the scalars $x_{k,1}(t;z,\tilde{z})$ identify the components of the curve $x_1(t;z,\tilde{z})$ and the matrix families $A_k(t)$ are symmetric. Recalling the identity $X^{-1}_0(t;z) = JX^\top_0(t;z)J^\top $ \cite{deGosson2011}, we can therefore write 
\begin{equation}
\begin{split}%{ll}
\Psi^{m:l}_H &\displaystyle=\int_0^{m\tau} \sum_{k=1}^n X^{-1}_0(t;z) JA_k(t) X_0(t;z)x_{k,1}(t;z,\tilde{z})dt \\&\displaystyle=J\int_0^{m\tau} \sum_{k=1}^n X^\top_0(t;z) A_k(t) X_0(t;z)x_{k,1}(t;z,\tilde{z})dt = J\hat{A},
\end{split}
\end{equation}
where $\hat{A}$ is still symmetric. Thus, by Lemma \ref{lem:Jtrace}, $\mathrm{trace}\left(S_\mathcal{V}\Psi^{m:l}_H R_\mathcal{V}\right)=0$ and, by linearity, 
\begin{equation}
\begin{split}%{ll}
\mathrm{trace}\left(S_\mathcal{V}\Psi^{m:l}_g R_\mathcal{V}\right) &\displaystyle = \mathrm{trace}\left(S_\mathcal{V}\int_0^{m\tau} X^{-1}_0(t;z)\partial_x g(x_0(t;z),t;m\tau/l,0)X_0(t;z) dt \,R_\mathcal{V}\right) \\  &\displaystyle = -m\tau v C_\mathcal{V}.
\end{split}
\end{equation}
\end{proof}

\subsection{Proof of the main theorems}
\label{app:A1S3}
\begin{proof}[Proof of Theorem \ref{thm:instability}] We will use the shorthand notation $\Psi = \Psi^{m:l}_H+\Psi^{m:l}_g$. Let us start with statement \textit{(ii)}. For $\varepsilon$ sufficiently small, Theorem \ref{thm:eigSplit} guarantees that, to unfold the evolution of some Floquet multipliers, it is sufficient to study the $2v$ eigenvalues $\mu_{k}(\varepsilon)$ of $B_\mathcal{V}\left( I+\varepsilon S_\mathcal{V} \Psi R_\mathcal{V} \right)+ O(\varepsilon^2)$ related to some unperturbed, strongly invariant subspace $\mathcal{V}$. According to Lemma \ref{lem:productperteigV} and due to the fact that the determinant is equal to the product of the eigenvalues, we have
\begin{equation}
\prod_{k=1}^{2v} \mu_{k}(\varepsilon) = \mathrm{det}\left( B_\mathcal{V}\left( I+\varepsilon S_\mathcal{V} \Psi R_\mathcal{V}\right) + O(\varepsilon^2)\right) = 1 -\varepsilon m\tau v C_\mathcal{V}+ O(\varepsilon^2).
\end{equation}
Thus, for $\varepsilon > 0$ small enough and $C_\mathcal{V} < 0$, there exist at least one index $\tilde{k}$ for which $|\mu_{\tilde{k}}(\varepsilon)|>1$, that implies instability.

We then prove statement \textit{(i)}, for which the discussion is more involved. As a basis for the tangent space $\mathcal{T}$, we choose the vector field $JDH(z)$ and a vector $b(z)\in\mathcal{T}$ orthogonal to $JDH(z)$ and normalized such that $\langle b(z) , DH(z) \rangle=1$. From Proposition A.1 in \cite{Cenedese2019}, the following identities hold 
\begin{equation}
\label{eq:rel1eig}
\begin{array}{lr}
\Pi^m(z)JDH(z)=JDH(z) ,& \Pi^m(z) b(z)=b(z)-mT'(h)JDH(z),
\end{array}
\end{equation}
where $h=H(z)$. Denoting $R_\mathcal{T}=[JDH(z) \,\,\, b(z)]$ and $a =mT'(h)$, from Eq. (\ref{eq:rel1eig}) the symplectic left inverse of $R_\mathcal{T}$ and their relative block $B_\mathcal{T}$ read 
\begin{equation}
\begin{array}{lr}
 S_\mathcal{T} =\begin{bmatrix} Jb(z) & DH(z) \end{bmatrix}^\top, & B_\mathcal{T}=\begin{bmatrix} 1 & -a \\ 0 & 1 \end{bmatrix}.
\end{array}
\end{equation}
According to Theorem \ref{thm:eigSplit}, to evaluate the perturbation of these coincident Floquet multipliers of the conservative limit, we need to study the eigenvalues $(\mu_1(\varepsilon),\mu_2(\varepsilon))$ of the perturbed block
\begin{equation}
B_\mathcal{T}\left( I+\varepsilon S_\mathcal{T} \Psi R_\mathcal{T}\right) + O(\varepsilon^2)=\begin{bmatrix} 1+\varepsilon (a_{11}-a a_{21}) & -a +\varepsilon (a_{12}-a a_{22})\\ \varepsilon a_{21} & 1+\varepsilon a_{22} \end{bmatrix} + O(\varepsilon^2),
\end{equation}
where we denoted $a_{jk}$ the components of the matrix $S_\mathcal{V} \Psi R_\mathcal{V}$. These eigenvalues are expressed as
\begin{equation}
\lambda_{1,2}=1+\varepsilon \frac{a_{11}+a_{22}-a a_{21}}{2}\pm\sqrt{\varepsilon}\sqrt{-a a_{21}} + o(\varepsilon).
\end{equation}
The value $a a_{21} = mT'(h) a_{21}$ acts as discriminant and we later prove that $a_{21}=M_{m:l}'(0)$. Thus, if $T'(h) M_{m:l}'(0)<0$ and $\varepsilon > 0$, then there exists a real eigenvalue greater than $1$, which implies instability as in statement \textit{(i)} of the Theorem for small $\varepsilon>0$. Conversely, the two eigenvalues are complex conjugated for small $\varepsilon >0$ whose squared modulus, using Lemma \ref{lem:productperteigV}, reads
\begin{equation}
\label{eq:modFMT}
\mu_1(\varepsilon)\mu_2(\varepsilon)=|\mu_1(\varepsilon)|^2=\mathrm{det}\left( B_\mathcal{T}\left( I+\varepsilon S_\mathcal{T} \Psi R_\mathcal{T}\right) + O(\varepsilon^2)\right) =1-\varepsilon m\tau C_\mathcal{T} + O(\varepsilon^2).
\end{equation}
If $C_{\mathcal{T}}<0$, the two eigenvalues evolve as a complex conjugated couple outside the unit circle in the complex plane for small $\varepsilon>0$, proving then statement \textit{(i)}.

We now show that $a_{21}=\langle DH(z),\Psi JDH(z) \rangle=DM_{m:l}(0)=M_{m:l}'(0)$. We first recall the following identities (see \cite{Cenedese2019,Chicone2000} for a proof):
\begin{equation}
\label{eq:solFvarCL}
\begin{array}{lr}
X_0(t;z)JDH(z)=JDH(x_0(t;z)) , & DH(z)X^{-1}_0(t;z)=DH(x_0(t;z)). 
\end{array}
\end{equation}
We also recall the fact that the gradient of the first integral solves the adjoint variational equation of the conservative limit (see Proposition 3.2 in \cite{Li2000} for a proof), i.e.,
\begin{equation}
\label{eq:DHFVadj}
D_t\big(DH(x_0(t;z))\big)=-DH(x_0(t;z))JD^2H(x_0(t;z)).
\end{equation}
Exploiting Eq. (\ref{eq:solFvarCL}), one obtains
\begin{equation}
\label{eq:a21redef}
\begin{array}{r}
\langle DH(z),\Psi JDH(z) \rangle = \displaystyle \int_0^{m\tau} \Big\langle DH(x_0(t;z)),\Big( JD^3H(x_0(t;z),x_1(t;z,\tilde{z})) +\\ + \partial_x g(x_0(t;z),t;m\tau/l,0)\Big) JDH(x_0(t;z) \Big\rangle dt
\end{array}
\end{equation}
and by taking the time derivative of the ODE in Eq. (\ref{eq:FVare}), one has the identity
\begin{equation}
\begin{array}{r}
\Big( JD^3H(x_0(t;z),x_1(t;z,\tilde{z}))+\partial_x g(x_0(t;z),t;m\tau/l,0)\Big) JDH(x_0(t;z)) = \\ \ddot{x}_1(t;z,\tilde{z})-JD^2H(x_0(t;z))\dot{x}_1(t;z,\tilde{z})-\partial_t g(x_0(t;z),t;m\tau/l,0).
\end{array}
\end{equation}
Substituting the latter result in Eq. (\ref{eq:a21redef}), one can use integration by parts to obtain
\begin{equation}
\begin{split}%{ll}
\langle DH(z),\Psi JDH(z) \rangle= & \displaystyle -\int_0^{m\tau} \big\langle D_t\big(DH(x_0(t;z))\big),\dot{x}_1(t;z,\tilde{z})\big\rangle dt + \\\\
& \displaystyle-\int_0^{m\tau} \big\langle DH(x_0(t;z)),JD^2H(x_0(t;z_0))\dot{x}_1(t;z,\tilde{z})\big\rangle dt+ \\\\
& \displaystyle -\int_0^{m\tau} \big\langle DH(x_0(t;z)),\partial_t g(x_0(t;z),t;m\tau/l,0)\big\rangle dt,
\end{split}
\end{equation}
where we also used the fact that $\langle DH(x_0(t;z)), \dot{x}_1(t;z,\tilde{z})\rangle$ is $m\tau$-periodic. Moreover, the first two integrals cancel out due to Eq. (\ref{eq:DHFVadj}) and one finds
\begin{equation}
a_{21}=\langle DH(z),\Psi JDH(z) \rangle = -\int_0^{m\tau} \big\langle DH(x_0(t;z)),\partial_t g(x_0(t;z),t;m\tau/l,0)\big\rangle dt = M_{m:l}'(0),
\end{equation}
according to the definition of Remark A.4 in \cite{Cenedese2019}.
\end{proof}
\begin{proof}[Proof of Theorem \ref{thm:astability}]
We prove asymptotic stability by showing that under the conditions of the theorem all the Floquet multipliers lay within the unit circle in the complex plane for $\varepsilon > 0$ sufficiently small. 

Condition \textit{(i)} may be derived directly from the proof of Theorem \ref{thm:instability}. Indeed, in this case the two eigenvalues equal to $+1$ evolve as a complex pair with modulus less than $1$, cf Eq. (\ref{eq:modFMT}).

By assumption, the complex conjugated complex pairs related to the normal space are distinct for $\varepsilon=0$, so we can iteratively apply Theorem \ref{thm:eigSplit} considering $\mathcal{V}$ as each of the 2-dimensional strongly invariant subspaces $\mathcal{N}_k$ for $k=1,...,\,n-1$. Moreover, these eigenvalue pairs persist as complex ones since $P(\varepsilon)$ is real, so it is sufficient to evaluate their squared modulus to evaluate whether they move inside or outside the unit circle of the complex plane. As done in the proof of Theorem \ref{thm:instability}, this can be estimated as $1-\varepsilon m\tau C_{\mathcal{N}_k}$ at first order. Hence, all the eigenvalues of the normal space have modulus lower than $1$ if all the $n-1$ volume contractions related to the subspaces $\mathcal{N}_k$ are positive.
\end{proof}
We remark that, for the estimates of the last theorem to be valid, the value of $\varepsilon$ must be much smaller than the minimal separation between the pairs of eigenvalues of $\Pi$. In particular, in the vicinity of bifurcation or crossing points, these approximations may turn out to be weak.
\begin{remark}
If the unperturbed system is not in Hamiltonian form, then one can still prove that the quantity $T'(h) M_{m:l}'(0)$ governs the stability in the tangential directions. However, the formulas for the volume contractions are more complicated in this case. The Hamiltonian form provides drastic simplifications so that these contractions only depend on the pullback of the linear vector field $\partial_x g$ under $X_0$. Moreover, all the results we have proved in these sections, also apply to more general perturbations of Hamiltonian systems rather than the specific form we assumed in section \ref{sec:S2}.
\end{remark}

\subsection{Proof of Proposition \ref{prop:uniformcontraction}}
\label{app:A1S4}
\begin{proof}
With the shorthand notation
\begin{equation}
\begin{array}{c}
\partial_q Q = \partial_q Q(q_0(t;z),F(q_0(t;z),p_0(t;z)),t;m\tau/l,0), \\ \partial_p Q = \partial_p Q(q_0(t;z),F(q_0(t;z),p_0(t;z)),t;m\tau/l,0),
\end{array}
\end{equation}
we split
\begin{equation}
\begin{array}{c}
\partial_x g(x_0(t,z),t;m\tau/l,0) = \begin{bmatrix} 0  & 0 \\ \partial_q Q & \partial_p Q\end{bmatrix} =J A_q(t)+A_p(t), \\ \\
\begin{array}{lr}
A_q(t)=\begin{bmatrix} -\partial_q Q & 0 \\ 0 & 0\end{bmatrix}, & 
A_p(t)=\begin{bmatrix} 0 & 0 \\ 0 & \partial_p Q \end{bmatrix},
\end{array}
\end{array}
\end{equation}
so that, using Eq. (\ref{eq:traceeq}), we have
\begin{equation}
\begin{array}{ll}
C_\mathcal{V} = &\displaystyle -\frac{1}{2 m\tau v}\int_0^{m\tau} \mathrm{trace}\left(S_\mathcal{V}X^{-1}_0(t;z)\left(JA_q(t)-JA_q^\top (t)\right)X_0(t;z)R_\mathcal{V}\right)dt\\ \\ & \displaystyle -\frac{1}{2 m\tau v}\int_0^{m\tau} \mathrm{trace}\left(S_\mathcal{V}X^{-1}_0(t;z)\left(A_p(t)-JA_p^\top (t)J\right)X_0(t;z)R_\mathcal{V}\right)dt.
\end{array}
\end{equation}
For $A_q(t)=A_q^{\top}(t)$, the first of the integrals above vanishes. By using the fact that
\begin{equation}
A_p(t)-JA_p^\top (t)J=\begin{bmatrix} \partial_p Q^\top  & 0 \\ 0 & \partial_p Q \end{bmatrix},
\end{equation}
and by substituting $\partial_p Q = -\alpha I$, we obtain $C_\mathcal{V} =\alpha$ as claimed. This result agrees with the symmetry property of the Lyapunov spectra of conformal Hamiltonian systems \cite{Dressler1987}. 
\end{proof}

\end{appendix}

\bibliographystyle{unsrt}
\bibliography{Biblio}

\begin{thebibliography}{10}

\bibitem{Kerschen2009}
G.~Kerschen, M.~Peeters, J.C. Golinval, and A.F. Vakakis.
\newblock Nonlinear normal modes, part {I}: A useful framework for the
  structural dynamicist.
\newblock {\em Mechanical Systems and Signal Processing}, 23(1):170--194, 2009.

\bibitem{Touze2006}
C.~Touz{é} and M.~Amabili.
\newblock Nonlinear normal modes for damped geometrically nonlinear systems:
  Application to reduced-order modelling of harmonically forced structures.
\newblock {\em Journal of Sound and Vibration}, 298(4):958--981, 2006.

\bibitem{Peeters2011a}
M.~Peeters, G.~Kerschen, and J.C. Golinval.
\newblock Dynamic testing of nonlinear vibrating structures using nonlinear
  normal modes.
\newblock {\em Journal of Sound and Vibration}, 330(3):486--509, 2011.

\bibitem{Peeters2011b}
M.~Peeters, G.~Kerschen, and J.C. Golinval.
\newblock Modal testing of nonlinear vibrating structures based on nonlinear
  normal modes: experimental demonstration.
\newblock {\em Mechanical Systems and Signal Processing}, 25(4):1227--1247,
  2011.

\bibitem{Ehrhardt2016}
D.A. Ehrhardt and M.S. Allen.
\newblock Measurement of nonlinear normal modes using multi-harmonic stepped
  force appropriation and free decay.
\newblock {\em Mechanical Systems and Signal Processing}, 76-77:612--633, 2016.

\bibitem{Renson2016b}
L.~Renson, A.~Gonzalez-Buelga, D.A.W. Barton, and S.A. Neild.
\newblock Robust identification of backbone curves using control-based
  continuation.
\newblock {\em Journal of Sound and Vibration}, 367:145--158, 2016.

\bibitem{Hill2017}
T.L. Hill, A.~Cammarano, S.A. Neild, and D.A.W. Barton.
\newblock Identifying the significance of nonlinear normal modes.
\newblock {\em Proceedings of the Royal Society of London A: Mathematical,
  Physical and Engineering Sciences}, 473:20160789, 2017.

\bibitem{Szalai2017}
R.~Szalai, D.~Ehrhardt, and G.~Haller.
\newblock Nonlinear model identification and spectral submanifolds for
  multi-degree-of-freedom mechanical vibrations.
\newblock {\em Proceedings of the Royal Society of London A: Mathematical,
  Physical and Engineering Sciences}, 473:20160759, 2017.

\bibitem{Cenedese2019}
M.~Cenedese and G.~Haller.
\newblock How do conservative backbone curves perturb into forced responses?
  {A} {M}elnikov function analysis.
\newblock {\em Proceedings of the Royal Society of London A: Mathematical,
  Physical and Engineering Sciences}, 476:20190494, 2020.

\bibitem{Kovacic2008}
I.~Kovacic, M.J. Brennan, and B.~Lineton.
\newblock On the resonance response of an asymmetric duffing oscillator.
\newblock {\em International Journal of Non-Linear Mechanics}, 43(9):858--867,
  2008.

\bibitem{Avramov2008}
K.V. Avramov.
\newblock Analysis of forced vibrations by nonlinear modes.
\newblock {\em Nonlinear Dynamics}, 53(1-2):117--127, 2008.

\bibitem{Ponsioen2019}
S.~Ponsioen, T.~Pedergnana, and G.~Haller.
\newblock Analytic prediction of isolated forced response curves from spectral
  submanifolds.
\newblock {\em Nonlinear Dynamics}, 98:2755--2773, 2019.

\bibitem{Vakakis1997}
A.F. Vakakis.
\newblock Non-linear normal modes, ({NNM}s) and their application in vibration
  theory: an overview.
\newblock {\em Mechanical Systems and Signal Processing}, 11(1):3--22, 1997.

\bibitem{Vakakis2001}
A.F. Vakakis, editor.
\newblock {\em Normal Modes and Localization in Nonlinear Systems}.
\newblock Springer, Dordrecht, 2001.

\bibitem{Vakakis2008}
A.F. Vakakis, L.I. Manevitch, Y.V. Mikhlin, V.N. Pilipchuk, and A.A. Zevin.
\newblock {\em Normal Modes and Localization in Nonlinear Systems}.
\newblock Wiley Blackwell, 1 2008.

\bibitem{Avramov2011}
K.V. Avramov and Y.V. Mikhlin.
\newblock Nonlinear normal modes for vibrating mechanical systems. review of
  theoretical developments.
\newblock {\em ASME Applied Mechanics Reviews}, 63(6), 2011.

\bibitem{Avramov2013}
K.V. Avramov and Y.V. Mikhlin.
\newblock Review of applications of nonlinear normal modes for vibrating
  mechanical systems.
\newblock {\em ASME Applied Mechanics Reviews}, 65(2), 2013.

\bibitem{Kerschen2014}
G.~Kerschen.
\newblock {\em Modal Analysis of Nonlinear Mechanical Systems}, volume 555 of
  {\em CISM International Centre for Mechanical Sciences}.
\newblock Springer-Verlag Wien, 2014.

\bibitem{Floquet1883}
G.~Floquet.
\newblock Sur les équations différentielles linéaires à coefficients
  périodiques.
\newblock {\em Annales scientifiques de l'École Normale Supérieure},
  12:47--88, 1883.

\bibitem{GH1983}
J.~Guckenheimer and P.J. Holmes.
\newblock {\em Nonlinear Oscillations, Dynamical Systems, and Bifurcations of
  Vector Fields}, volume~42 of {\em Applied Mathematical Sciences}.
\newblock Springer-Verlag New York, 1983.

\bibitem{Lazarus2010}
A.~Lazarus and O.~Thomas.
\newblock A harmonic-based method for computing the stability of periodic
  solutions of dynamical systems.
\newblock {\em Comptes Rendus Mécanique}, 338(9):510--517, 2010.

\bibitem{Guillot2020}
L.~Guillot, A.~Lazarus, O.~Thomas, C.~Vergez, and B.~Cochelin.
\newblock A purely frequency based floquet-hill formulation for the efficient
  stability computation of periodic solutions of ordinary differential systems.
\newblock {\em Journal of Computational Physics}, 416:109477, 2020.

\bibitem{NayfehM2007}
A.H. Nayfeh and D.T. Mook.
\newblock {\em Nonlinear Oscillations}.
\newblock Wiley, 2007.

\bibitem{Sanders2007}
J.A. Sanders, F.~Verhulst, and J.~Murdock.
\newblock {\em Averaging Methods in Nonlinear Dynamical Systems}, volume~59 of
  {\em Applied Mathematical Sciences}.
\newblock Springer-Verlag New York, 2 edition, 2007.

\bibitem{Touze2004}
C.~Touzé, O.~Thomas, and A.~Chaigne.
\newblock Hardening/softening behaviour in non-linear oscillations of
  structural systems using non-linear normal modes.
\newblock {\em Journal of Sound and Vibration}, 273(1):77--101, 2004.

\bibitem{Neild2011}
S.A. Neild and D.J. Wagg.
\newblock Applying the method of normal forms to second-order nonlinear
  vibration problems.
\newblock {\em Proceedings of the Royal Society of London A: Mathematical,
  Physical and Engineering Sciences}, 467(2128):1141--1163, 2011.

\bibitem{Haller2016}
G.~Haller and S.~Ponsioen.
\newblock Nonlinear normal modes and spectral submanifolds: existence,
  uniqueness and use in model reduction.
\newblock {\em Nonlinear Dynamics}, 86(3):1493--1534, 2016.

\bibitem{Breunung2018}
T.~Breunung and G.~Haller.
\newblock Explicit backbone curves from spectral submanifolds of forced-damped
  nonlinear mechanical systems.
\newblock {\em Proceedings of the Royal Society of London A: Mathematical,
  Physical and Engineering Sciences}, 474:20180083, 2018.

\bibitem{delaLlave2019}
R.~de~la Llave and F.~Kogelbauer.
\newblock Global persistence of {L}yapunov subcenter manifolds as spectral
  submanifolds under dissipative perturbations.
\newblock {\em SIAM Journal on Applied Dynamical Systems}, 18(4):2099--2142,
  2019.

\bibitem{Veraszto2020}
Z.~Veraszto, S.~Ponsioen, and G.~Haller.
\newblock Explicit third-order model reduction formulas for general nonlinear
  mechanical systems.
\newblock {\em Journal of Sound and Vibration}, 468:115039, 2020.

\bibitem{Lyapunov1992}
A.M. Lyapunov.
\newblock The general problem of the stability of motion.
\newblock {\em International Journal of Control}, 55(3):531--773, 1992.

\bibitem{Kelley1967}
A.~Kelley.
\newblock On the {L}iapounov subcenter manifold.
\newblock {\em Journal of Mathematical Analysis and Applications},
  18(3):472--478, 1967.

\bibitem{Poincare1892}
H.~Poincaré.
\newblock {\em Les Méthodes Nouvelles de la Mécanique Céleste}.
\newblock Gauthier-Villars et Fils, Paris, 1892.

\bibitem{Melnikov1963}
V.K. Melnikov.
\newblock On the stability of a center for time-periodic perturbations.
\newblock {\em Tr. Mosk. Mat. Obs.}, 12:3--52, 1963.

\bibitem{Arnold1964}
V.I. Arnol’d.
\newblock Instability of dynamical systems with many degrees of freedom.
\newblock {\em Dokl. Akad. Nauk SSSR}, 156(1):9--12, 1964.

\bibitem{Yagasaki1996}
K.~Yagasaki.
\newblock The {M}elnikov theory for subharmonics and their bifurcations in
  forced oscillations.
\newblock {\em SIAM Journal on Applied Mathematics}, 56(6):1720--1765, 1996.

\bibitem{Bonnin2008}
M.~Bonnin.
\newblock Harmonic balance, {M}elnikov method and nonlinear oscillators under
  resonant perturbation.
\newblock {\em International Journal of Circuit Theory and Applications},
  36(3):247--274, 2008.

\bibitem{Veerman1985}
P.~Veerman and P.~Holmes.
\newblock The existence of arbitrarily many distinct periodic orbits in a two
  degree of freedom hamiltonian system.
\newblock {\em Physica D: Nonlinear Phenomena}, 14(2):177--192, 1985.

\bibitem{Yagasaki1999}
K.~Yagasaki.
\newblock Periodic and homoclinic motions in forced, coupled oscillators.
\newblock {\em Nonlinear Dynamics}, 20(4):319--359, 1999.

\bibitem{Arnold1989}
V.I. Arnol’d.
\newblock {\em Mathematical Methods of Classical Mechanics}, volume~60 of {\em
  Graduate Texts in Mathematics}.
\newblock Springer-Verlag New York, 1989.

\bibitem{deGosson2011}
M.~A. de~Gosson.
\newblock {\em Symplectic Methods in Harmonic Analysis and in Mathematical
  Physics}.
\newblock Birk$\ddot{\mathrm{a}}$user, 2011.

\bibitem{MO2017}
K.R. Meyer and D.C. Offin.
\newblock {\em Introduction to Hamiltonian Dynamical Systems and the N-body
  Problem}, volume~90 of {\em Applied Mathematical Sciences}.
\newblock Springer-Verlag New York, 3 edition, 2017.

\bibitem{Sepulchre1997}
J.A. Sepulchre and R.S. MacKay.
\newblock Localized oscillations in conservative or dissipative networks of
  weakly coupled autonomous oscillators.
\newblock {\em Nonlinearity}, 10(3):679, 1997.

\bibitem{Doedel2003}
F.J. Muñoz-Almaraz, E.~Freire, J.~Galán, E.~Doedel, and A.~Vanderbauwhede.
\newblock Continuation of periodic orbits in conservative and hamiltonian
  systems.
\newblock {\em Physica D: Nonlinear Phenomena}, 181(1):1--38, 2003.

\bibitem{Vanderbauwhede1997}
A.~Vanderbauwhede.
\newblock Branching of periodic orbits in {H}amiltonian and reversible systems.
\newblock In R.P. Agarwal, F.~Neuman, and J.~Vosmanský, editors, {\em
  Proceedings of Equadiff 9, Conference on Differential Equations and Their
  Applications, Brno, August 25-29, 1997}, pages 169--181. Masaryk University
  Brno, 1998.

\bibitem{Hill2015}
T.L. Hill, A.~Cammarano, S.A. Neild, and D.J. Wagg.
\newblock Interpreting the forced responses of a two-degree-of-freedom
  nonlinear oscillator using backbone curves.
\newblock {\em Journal of Sound and Vibration}, 349:276--288, 2015.

\bibitem{Hill2016}
T.L. Hill, S.A. Neild, and A.~Cammarano.
\newblock An analytical approach for detecting isolated periodic solution
  branches in weakly nonlinear structures.
\newblock {\em Journal of Sound and Vibration}, 379:150--165, 2016.

\bibitem{Peter2018}
S.~Peter, M.~Scheel, M.~Krack, and R.I. Leine.
\newblock Synthesis of nonlinear frequency responses with experimentally
  extracted nonlinear modes.
\newblock {\em Mechanical Systems and Signal Processing}, 101:498--515, 2018.

\bibitem{Teschl2012}
G.~Teschl.
\newblock {\em Ordinary Differential Equations and Dynamical Systems}, volume
  140 of {\em Graduate Studies in Mathematics}.
\newblock American Mathematical Society, 2012.

\bibitem{Chicone2000}
C.~Chicone.
\newblock {\em Ordinary Differential Equations with Applications}, volume~34 of
  {\em Texts in Applied Mathematics}.
\newblock Springer-Verlag New York, 1982.

\bibitem{Stewart1990}
G.~Stewart and J.~Sun.
\newblock {\em Matrix Perturbation Theory}.
\newblock Computer Science and Scientific Computing. Academic Press, 1990.

\bibitem{Sbano2009}
L.~Sbano.
\newblock Periodic orbits of {H}amiltonian systems.
\newblock In R.A. Meyers, editor, {\em Encyclopedia of Complexity and Systems
  Science}, pages 6587--6611. Springer New York, 2009.

\bibitem{Karow2014}
M.~Karow and D.~Kressner.
\newblock On a perturbation bound for invariant subspaces of matrices.
\newblock {\em SIAM Journal on Matrix Analysis and Applications},
  35(2):599--618, 2014.

\bibitem{MH1992}
K.~Meyer and G.~Hall.
\newblock {\em Introduction to Hamiltonian Dynamical Systems and the N-body
  Problem}, volume~90 of {\em Applied Mathematical Sciences}.
\newblock Springer-Verlag New York, 1st edition, 1992.

\bibitem{GolubitskySchaeffer1985}
M.~Golubitsky and S.~Schaeffer.
\newblock {\em Singularities and Groups in Bifurcation Theory}, volume~51 of
  {\em Applied Mathematical Sciences}.
\newblock Springer-Verlag New York, 1985.

\bibitem{Arnold1988}
V.I. Arnol’d.
\newblock {\em Geometrical Methods in the Theory of Ordinary Differential
  Equations}, volume 250 of {\em Grundlehren der mathematischen
  Wissenschaften}.
\newblock Springer-Verlag New York, 1988.

\bibitem{Kuznetsov1995}
Y.A. Kuznetsov.
\newblock {\em Elements of Applied Bifurcation Theory}, volume 112 of {\em
  Applied Mathematical Sciences}.
\newblock Springer-Verlag New York, 1995.

\bibitem{Amabili2007}
M.~Amabili and C.~Touzé.
\newblock Reduced-order models for nonlinear vibrations of fluid-filled
  circular cylindrical shells: Comparison of {POD} and asymptotic nonlinear
  normal modes methods.
\newblock {\em Journal of Fluids and Structures}, 23(6):885--903, 2007.

\bibitem{Renson2015}
L.~Renson, J.P. No{ë}l, and G.~Kerschen.
\newblock Complex dynamics of a nonlinear aerospace structure: numerical
  continuation and normal modes.
\newblock {\em Nonlinear Dynamics}, 79:1293--1309, 2015.

\bibitem{Kuether2015}
R.J. Kuether, L.~Renson, T.~Detroux, C.~Grappasonni, G.~Kerschen, and M.S.
  Allen.
\newblock Nonlinear normal modes, modal interactions and isolated resonance
  curves.
\newblock {\em Journal of Sound and Vibration}, 351:299--310, 2015.

\bibitem{McLachlan2001}
R.~McLachlan and M.~Perlmutter.
\newblock Conformal hamiltonian systems.
\newblock {\em Journal of Geometry and Physics}, 39(4):276--300, 2001.

\bibitem{Calleja2013}
R.C. Calleja, A.~Celletti, and R.~de~la Llave.
\newblock A {KAM} theory for conformally symplectic systems: Efficient
  algorithms and their validation.
\newblock {\em Journal of Differential Equations}, 255(5):978--1049, 2013.

\bibitem{Ishida2012}
Y.~Ishida and T.~Yamamoto.
\newblock {\em Nonlinear Vibrations}, chapter~6, pages 115--159.
\newblock John Wiley \& Sons, Ltd, 2012.

\bibitem{Younis2011}
M.I. Younis.
\newblock {\em MEMS Linear and Nonlinear Statics and Dynamics}, volume~20 of
  {\em Microsystems}.
\newblock Springer US, 2011.

\bibitem{Ehrich1988}
F.~F. Ehrich.
\newblock High order subharmonic response of high speed rotors in bearing
  clearance.
\newblock {\em Journal of Vibration, Acoustics, Stress, and Reliability in
  Design}, 110(1):9--16, 1988.

\bibitem{Hamed2016}
Y.S. Hamed, A.T. El-Sayed, and E.R. El-Zahar.
\newblock On controlling the vibrations and energy transfer in mems gyroscope
  system with simultaneous resonance.
\newblock {\em Nonlinear Dynamics}, 83(3):1687--1704, 2016.

\bibitem{Crandall1970}
S.H. Crandall.
\newblock The role of damping in vibration theory.
\newblock {\em Journal of Sound and Vibration}, 11(1):3--18, 1970.

\bibitem{Dankowicz2013}
H.~Dankowicz and F.~Schilder.
\newblock {\em Recipes for Continuation}.
\newblock Society for Industrial and Applied Mathematics, 2013.

\bibitem{Rhoads2005}
J.F. Rhoads, S.W. Shaw, K.L. Turner, and R.~Baskaran.
\newblock Tunable microelectromechanical filters that exploit parametric
  resonance.
\newblock {\em Journal of Vibration and Acoustics}, 127(5):423--430, 01 2005.

\bibitem{Gantmacher1959}
F.R. Gantmacher.
\newblock {\em The Theory of Matrices}, volume~1.
\newblock Chelsea Publishing Company New York, 1959.

\bibitem{Lancaster2006}
I.~Gohberg, P.~Lancaster, and L.~Rodman.
\newblock {\em Invariant Subspaces of Matrices with Applications}.
\newblock Society for Industrial and Applied Mathematics, 2006.

\bibitem{Li2000}
M.Y. Li and J.S. Muldowney.
\newblock Dynamics of differential equations on invariant manifolds.
\newblock {\em Journal of Differential Equations}, 168(2):295--320, 2000.

\bibitem{Dressler1987}
U.~Dressler.
\newblock Symmetry property of the {L}yapunov spectra of a class of dissipative
  dynamical systems with viscous damping.
\newblock {\em Phys. Rev. A}, 38:2103--2109, 1988.

\end{thebibliography}
\end{document}